\newcommand{\A}{{\mathcal A}}
\newcommand{\abs}[1]{\left|#1\right|}
\newcommand{\as}{\ins{as}}
\newcommand{\bgset}[1]{\big\{#1\big\}}
\newcommand{\dint}{\ds{\int}}
\newcommand{\ds}[1]{\displaystyle #1}
\newcommand{\half}{\frac{1}{2}}
\newcommand{\hquad}{\hspace{0.08in}}
\newcommand{\ins}[1]{\hquad \text{#1} \hquad}
\newcommand{\ip}[3][]{\left(#2,#3\right)_{#1}}
\newcommand{\M}{{\cal M}}
\newcommand{\N}{\mathbb N}
\newcommand{\norm}[2][]{\left\|#2\right\|_{#1}}
\renewcommand{\O}{\text{O}}
\renewcommand{\o}{\text{o}}
\newcommand{\pnorm}[2][]{\if #1'' \left|#2\right|_p \else \left|#2\right|_{#1} \fi}
\newcommand\pow{{(p-2)/p}}
\newcommand{\QED}{\mbox{\qedhere}}
\newcommand{\R}{\mathbb R}
\newcommand{\restr}[2]{\left.#1\right|_{#2}}
\newcommand{\set}[1]{\left\{#1\right\}}
\newcommand\wop{{p/(p-2)}}
\DeclareMathOperator{\sign}{sign}
\DeclareMathOperator{\supp}{supp}
\newenvironment{enumroman}{\begin{enumerate}

}{\end{enumerate}}
\newtheorem{corollary}{Corollary}[section]
\newtheorem{lemma}[corollary]{Lemma}
\newtheorem{proposition}[corollary]{Proposition}
\newtheorem{theorem}[corollary]{Theorem}
\theoremstyle{remark}
\newtheorem{remark}[corollary]{Remark}
\numberwithin{equation}{section}
\title{\vspace{-0.35in} \bf On the Second Minimax Level of the Scalar Field Equation and Symmetry Breaking\thanks{Research supported by funds from the Swedish Research Council.
\newline \smallskip \indent\; {\em MSC2010:} Primary 35J61, 35J20, Secondary 47J10
\newline \smallskip \indent\; {\em Key Words and Phrases:} Scalar field equation, minimax methods, concentration compactness, symmetry breaking}}
\author{\bf Kanishka Perera\thanks{This work was completed while the first-named author was visiting the Department of Mathematics at the Uppsala University, and he is grateful for the kind hospitality of the department.}\\
Department of Mathematical Sciences\\
Florida Institute of Technology\\
Melbourne, FL 32901, USA\\
\em kperera@fit.edu\\
[\bigskipamount]
\bf Cyril Tintarev\\
Department of Mathematics\\
Uppsala University\\
75 106, Uppsala, Sweden\\
\em tintarev@math.uu.se}
\date{}
\begin{document}

\maketitle

\begin{abstract}
We study the second minimax level $\lambda_2$ of the eigenvalue problem for the scalar field equation in $\R^N$. We prove the existence of an eigenfunction at the level $\lambda_2$ when the potential near infinity approaches the constant level from below no faster than $e^{- \varepsilon\, |x|}$. We also consider questions about the nodality of eigenfunctions at this level and establish symmetry breaking at the levels $2,\dots,N$.
\end{abstract}

\section{Introduction}

Consider the eigenvalue problem for the scalar field equation
\begin{equation} \label{1.1}
- \Delta u + V(x)\, u = \lambda\, |u|^{p-2}\, u, \quad x \in \R^N,
\end{equation}
where $N \ge 2$, $V \in L^\infty(\R^N)$ satisfies
\begin{equation} \label{1.2}
\lim_{|x| \to \infty} V(x) = V^\infty > 0,
\end{equation}
$p \in (2,2^\ast)$, and $2^\ast = 2N/(N-2)$ if $N \ge 3$ and $2^\ast = \infty$ if $N = 2$. Let
\[
I(u) = \int_{\R^N} |u|^p, \quad J(u) = \int_{\R^N} |\nabla u|^2 + V(x)\, u^2, \quad u \in H^1(\R^N).
\]
Then the eigenfunctions of \eqref{1.1} on the manifold
\[
\M = \bgset{u \in H^1(\R^N) : I(u) = 1}
\]
and the corresponding eigenvalues coincide with the critical points and the critical values of the constrained functional $\restr{J}{\M}$, respectively.

This problem has been studied extensively for more than three decades. Least energy solutions, also called ground states, are well-understood. In general, the infimum
\begin{equation} \label{1.3}
\lambda_1 := \inf_{u \in \M}\, J(u)
\end{equation}
is not attained. For the autonomous problem at infinity,
\begin{equation} \label{1.4}
- \Delta u + V^\infty\, u = \lambda\, |u|^{p-2}\, u, \quad x \in \R^N,
\end{equation}
the corresponding functional
\[
J^\infty(u) = \int_{\R^N} |\nabla u|^2 + V^\infty\, u^2
\]
attains its infimum
\begin{equation} \label{1.5}
\lambda_1^\infty := \inf_{u \in \M}\, J^\infty(u) > 0
\end{equation}
at a radial function $w^\infty_1 > 0$ and this minimizer is unique up to translations (see Berestycki and Lions \cite{MR695535} and Kwong \cite{MR969899}). For the nonautonomous problem, we have $\lambda_1 \le \lambda_1^\infty$ by the translation invariance of $J^\infty$, and $\lambda_1$ is attained if the inequality is strict (see Lions \cite{MR778970}).

As for the higher energy solutions, also called bound states, radial solutions have been extensively studied when the potential $V$ is radially symmetric (see, e.g., Berestycki and Lions \cite{MR695536}, Grillakis \cite{MR1054554}, and Jones and K{\"u}pper \cite{MR846391}). The subspace $H^1_r(\R^N)$ of $H^1(\R^N)$ consisting of radially symmetric functions is compactly imbedded into $L^p(\R^N)$ for $p \in (2,2^\ast)$ by the compactness result of Strauss \cite{MR0454365}. Denoting by $\Gamma_{m,\, r}$ the class of all odd continuous maps from the unit sphere $S^{m-1} = \bgset{y \in \R^m : |y| = 1}$ to $\M_r = \M \cap H^1_r(\R^N)$, increasing and unbounded sequences of critical values of $\restr{J}{\M_r}$ and $\restr{J^\infty}{\M_r}$ can therefore be defined by
\begin{multline} \label{1.6T}
\lambda_{m,\, r} := \inf_{\gamma \in \Gamma_{m,\, r}}\, \max_{u \in \gamma(S^{m-1})}\, J(u), \quad \lambda_{m,\, r}^\infty := \inf_{\gamma \in \Gamma_{m,\, r}}\, \max_{u \in \gamma(S^{m-1})}\, J^\infty(u),\\
m \in \N,
\end{multline}
respectively. Furthermore, Sobolev imbeddings remain compact for subspaces with any sufficiently robust symmetry (see Bartsch and Wang \cite{MR1349229} and Devillanova and Solimini \cite{MR2895950}). Clapp and Weth \cite{MR2103845} have obtained multiple solutions without any symmetry assumptions, with the number of solutions depending on $N$, under a robust penalty condition similar to \eqref{1.11T} below, but their result does not locate the solutions on particular minimax levels. There is also an extensive literature on multiple solutions of scalar field equations in topologically nontrivial unbounded domains, for which we refer the reader to the survey paper of Cerami \cite{MR2278729}.

In the present paper we study the second minimax levels
\[
\lambda_2 := \inf_{\gamma \in \Gamma_2}\, \max_{u \in \gamma(S^1)}\, J(u), \quad \lambda_2^\infty := \inf_{\gamma \in \Gamma_2}\, \max_{u \in \gamma(S^1)}\, J^\infty(u),
\]
where $\Gamma_2$ is the class of all odd continuous maps from $S^1 = \bgset{y \in \R^2 : |y| = 1}$ to $\M$. It is known that
\begin{equation} \label{1.6}
\lambda_2^\infty = 2^\pow\, \lambda_1^\infty
\end{equation}
is not critical (see, e.g., Weth \cite{MR2263672}). First we give sufficient conditions for $\lambda_2$ to be critical. Recall that
\begin{equation} \label{1.8}
w_1^\infty(x) \sim C_0\, \frac{e^{- \sqrt{V^\infty}\, |x|}}{|x|^{(N-1)/2}} \as |x| \to \infty
\end{equation}
for some constant $C_0 > 0$, and that there are constants $0 < a_0 \le \sqrt{V^\infty}$ and $C > 0$ such that if $\lambda_1$ is attained at $w_1 \ge 0$, then
\begin{equation} \label{1.9}
w_1(x) \le C\, e^{- a_0\, |x|} \quad \forall x \in \R^N
\end{equation}
(see Gidas et al. \cite{MR544879} for the case of constant $V$; the general case follows from an elementary comparison argument). Write
\[
V(x) = V^\infty - W(x),
\]
so that $W(x) \to 0$ as $|x| \to \infty$ by \eqref{1.2}, and write $\pnorm{\cdot}$ for the $L^p$-norm. Our main existence result for the nonautonomous problem \eqref{1.1} is the following.

\begin{theorem} \label{Theorem 1.1}
Assume that $V \in L^\infty(\R^N)$ satisfies \eqref{1.2}, $p \in (2,2^\ast)$, and
\begin{equation} \label{1.11T}
W(x) \ge c\, e^{- a\, |x|} \quad \forall x \in \R^N
\end{equation}
for some constants $0 < a < a_0$ and $c > 0$.
\begin{enumroman}
\item If $\lambda_1 > 0$ and $W \in L^\wop(\R^N)$ with
    \begin{equation} \label{1.10}
    \pnorm[\wop]{W} < \left(2^\pow - 1\right) \lambda_1^\infty,
    \end{equation}
    then the equation \eqref{1.1} has a solution on $\M$ for $\lambda = \lambda_2$.
\item If $\lambda_1 \le 0$, then the equation \eqref{1.1} has a solution on $\M$ for $\lambda = \lambda_2$, and this solution is nodal if $\lambda_1 \le 0 < \lambda_2$ or $\lambda_1 < 0 \le \lambda_2$.
\end{enumroman}
\end{theorem}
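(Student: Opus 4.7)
The plan is to establish a profile decomposition for Palais--Smale sequences of $\restr{J}{\M}$ at the minimax level $\lambda_2$ and then rule out all non-convergent scenarios by constructing an explicit odd loop in $\Gamma_2$ that drives $\lambda_2$ strictly below the compactness threshold $\lambda_2^\infty = 2^\pow \lambda_1^\infty$. First I would take, via Ekeland's variational principle applied to the minimax, a PS sequence $(u_n) \subset \M$ with $J(u_n) \to \lambda_2$ and $(\restr{J}{\M})'(u_n) \to 0$; the Lagrange multipliers satisfy $\lambda_n \to \lambda_2$. A standard subcritical profile decomposition yields a weak limit $u_0 \in H^1(\R^N)$ solving $-\Delta u_0 + V u_0 = \lambda_2 |u_0|^{p-2}u_0$, finitely many profiles $v^{(1)},\dots,v^{(k)} \not\equiv 0$ each solving $-\Delta v + V^\infty v = \lambda_2 |v|^{p-2} v$, and translations $|y_n^{(j)}| \to \infty$ with $u_n - u_0 - \sum_j v^{(j)}(\cdot - y_n^{(j)}) \to 0$ in $H^1(\R^N)$ and $I(u_0) + \sum_j I(v^{(j)}) = 1$. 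Compactness reduces to showing $k = 0$.

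The pivot is the strict inequality $\lambda_2 < \lambda_2^\infty$. I would test $\Gamma_2$ on the odd path
\[
\gamma(\theta) = t(\theta)\bigl[\cos\theta\, w_1^\infty(\cdot - y\xi) + \sin\theta\, w_1^\infty(\cdot + y\xi)\bigr], \quad \theta \in S^1,
\]
where $\xi \in \R^N$ is a fixed unit vector, $y$ is large, and $t(\theta) > 0$ normalizes to $\M$. Since the two translates are nearly disjoint for $y \gg 1$, an expansion around the maximizing angle $\theta = \pi/4$ gives
\[
\max_{\theta \in S^1} J(\gamma(\theta)) = 2^\pow \bigl[\lambda_1^\infty - \delta_y\bigr] + O\bigl(e^{-2\sqrt{V^\infty}\, y}\bigr), \quad \delta_y := \int_{\R^N} W(x)\, w_1^\infty(x - y\xi)^2\, dx.
\]
Hypothesis \eqref{1.11T} and the decay \eqref{1.8} give $\delta_y \ge C e^{-ay}$ for some $C > 0$, and because $a < a_0 \le \sqrt{V^\infty}$ the penalty $\delta_y$ strictly dominates the bump-bump interaction $O(e^{-2\sqrt{V^\infty}y})$, so $\lambda_2 \le \max_\theta J(\gamma(\theta)) < \lambda_2^\infty$. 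This delicate exponential comparison is the main technical obstacle of the proof.

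To exclude splitting, observe that each nontrivial rescaled $\tilde v^{(j)} = v^{(j)}/I(v^{(j)})^{1/p} \in \M$ is a critical point of $\restr{J^\infty}{\M}$ with value $\lambda_2\, I(v^{(j)})^\pow \ge \lambda_1^\infty$, and if $u_0 \not\equiv 0$, the rescaled $\tilde u_0 \in \M$ is a critical point of $\restr{J}{\M}$ with value $\ge \lambda_1$. If $u_0 \equiv 0$ and $k \ge 2$, the mass balance forces $\lambda_2 \ge 2^\pow \lambda_1^\infty$, contradicting the strict upper bound. If $u_0 \equiv 0$ and $k = 1$, then $\lambda_2 = \lambda_1^\infty$, excluded by an odd-map/genus obstruction: the image of a loop in $\Gamma_2$ cannot collapse to a single translated profile without exceeding the value $\lambda_1^\infty$. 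For the remaining case $u_0 \not\equiv 0$ with $k \ge 1$ under (i), the mass balance gives $\lambda_2^\wop \ge \lambda_1^\wop + (\lambda_1^\infty)^\wop$, and combining with the H\"older estimate $\lambda_1 \ge \lambda_1^\infty - \pnorm[\wop]{W}$ together with hypothesis \eqref{1.10} yields $\lambda_2 \ge \lambda_2^\infty$, a contradiction. Hence $k = 0$ and $u_0 \in \M$ is a critical point at level $\lambda_2$.

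For case (ii), when $\lambda_1 < 0$, Lions \cite{MR778970} furnishes a ground state $w_1 \ge 0$ attaining $\lambda_1$; I would replace the path of Step 2 by $\gamma(\theta) = t(\theta)[\cos\theta\, w_1 + \sin\theta\, w_1^\infty(\cdot - y\xi)]$, where the strict inequality $\lambda_2 < \lambda_2^\infty$ follows immediately from the negative contribution of $w_1$, without invoking \eqref{1.10}. For $\lambda_1 = 0$ the two-bump path of Step 2 still works. The splitting analysis goes through as in part (i), the bound $\lambda_1 \le 0$ replacing the use of \eqref{1.10}. Finally, nodality follows from Picone's inequality: for a hypothetical positive critical point $u > 0$ at level $\lambda_2$, testing against the ground state $w_1$ and using the PDE yields $\lambda_1 = J(w_1) \ge \lambda_2 \int w_1^2 u^{p-2}$; under either $\lambda_1 \le 0 < \lambda_2$ or $\lambda_1 < 0 \le \lambda_2$, the right-hand side is $\ge 0$ (strictly positive in the former), contradicting the sign of $\lambda_1$.
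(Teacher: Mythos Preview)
Your compactness threshold is wrong, and this breaks case (i). The splitting scenario $u_0 \not\equiv 0$, $k \ge 1$ yields only
\[
\lambda_2^\wop \ge \lambda_1^\wop + (\lambda_1^\infty)^\wop,
\]
i.e.\ $\lambda_2 \ge \lambda^\# := \big(\lambda_1^\wop + (\lambda_1^\infty)^\wop\big)^\pow$. Under hypothesis \eqref{1.11T} one has $\lambda_1 < \lambda_1^\infty$ strictly, hence $\lambda^\# < \lambda_2^\infty$ strictly. Your two-bump path of translates of $w_1^\infty$ only gives $\lambda_2 < \lambda_2^\infty$, which does \emph{not} contradict $\lambda_2 \ge \lambda^\#$. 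The algebra you sketch---that $\lambda_1 \ge \lambda_1^\infty - \pnorm[\wop]{W}$ together with \eqref{1.10} forces $\lambda_2 \ge \lambda_2^\infty$---would require $\lambda_1 \ge \lambda_1^\infty$, which fails here. The paper closes this gap by building the test path from the actual ground state $w_1$ (which exists because \eqref{1.11T} forces $\lambda_1 < \lambda_1^\infty$) together with a single translate $w_1^\infty(\cdot - y)$; an expansion using the decay \eqref{1.9} of $w_1$ then gives $\lambda_2 < \lambda^\#$ directly. The same construction is needed for $\lambda_1 = 0$ in case (ii), where the relevant threshold is $\lambda^\# = \lambda_1^\infty$; your two-bump path there again only delivers $\lambda_2 < \lambda_2^\infty$, which is too weak.

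A second gap: the ``genus obstruction'' you invoke for $u_0 \equiv 0$, $k = 1$ is not a valid mechanism---nothing in the minimax structure prevents a Palais--Smale sequence at level $\lambda_2$ from concentrating on a single escaping bump. In that case the profile solves \eqref{1.4} on $\M$ with $\lambda = \lambda_2$, so one must show $\lambda_2 > \lambda_1^\infty$ and then invoke the nonexistence of solutions of \eqref{1.4} on $\M$ for $\lambda_1^\infty < \lambda \le \lambda_2^\infty$. This is precisely the role of \eqref{1.10} in the paper: it gives $\lambda_2 \ge \lambda_2^\infty - \pnorm[\wop]{W} > \lambda_1^\infty$. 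Your proposal misplaces where \eqref{1.10} is actually used.
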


The existence of a ground state was initially proved under the penalty condition $V(x) < V^\infty$ by Lions \cite{MR778970}, but it was later relaxed by a term of the order $e^{- a\, |x|}$ by Bahri and Lions \cite{MR1450954}. This can be understood in the sense that the existence of the ground state in the autonomous case is somewhat robust. In our case the same order of correction is involved with the reverse sign, namely, while $\lambda_2^\infty$ is not a critical level for $\restr{J^\infty}{\M}$, it requires the enhanced penalty $V(x) \le V^\infty - c\, e^{- a\, |x|}$ to assure that $\lambda_2$ is critical for $\restr{J}{\M}$. We believe that careful calculations will show that this correction cannot be removed, which in turn suggests that the nonexistence of the second eigenfunction in the autonomous case is equally robust as the existence of the first eigenfunction.

Next we consider the higher minimax levels
\[
\lambda_m := \inf_{\gamma \in \Gamma_m}\, \max_{u \in \gamma(S^{m-1})}\, J(u), \quad \lambda_m^\infty := \inf_{\gamma \in \Gamma_m}\, \max_{u \in \gamma(S^{m-1})}\, J^\infty(u), \quad m \ge 3,
\]
where $\Gamma_m$ is the class of all odd continuous maps from $S^{m-1}$ to $\M$. By \eqref{1.2} and the translation invariance of $J^\infty$,
\begin{equation} \label{1.12}
\lambda_m \le \lambda_m^\infty \quad \forall m \in \N.
\end{equation}
In general, $\lambda_m^\infty$ may be different from the more standard minimax values
\[
\widetilde{\lambda}_m^\infty := \inf_{A \in \A_m}\, \sup_{u \in A}\, J^\infty(u), \quad m \in \N,
\]
where $\A_m$ is the family of all nonempty closed symmetric subsets $A \subset \M$ with genus
\[
i(A) := \inf\, \bgset{n \ge 1 : \exists \ins{an odd continuous map} A \to S^{n-1}} \ge m.
\]
If $\gamma \in \Gamma_m$, then $i(\gamma(S^{m-1})) \ge i(S^{m-1}) = m$ and hence $\gamma(S^{m-1}) \in \A_m$, so
\begin{equation} \label{1.13}
\widetilde{\lambda}_m^\infty \le \lambda_m^\infty \quad \forall m \in \N,
\end{equation}
in particular, $\widetilde{\lambda}_1^\infty = \lambda_1^\infty$. We prove the following nonexistence result for the autonomous problem \eqref{1.4}.

\begin{theorem} \label{Theorem 1.2}
If $p \in (2,2^\ast)$, then
\begin{equation} \label{1.14}
\lambda_m^\infty = 2^\pow\, \lambda_1^\infty = \widetilde{\lambda}_m^\infty, \quad m = 2,\dots,N.
\end{equation}
Hence none of them are critical for $\restr{J^\infty}{\M}$.
\end{theorem}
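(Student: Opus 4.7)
The strategy is to sandwich $\lambda_m^\infty$ and $\widetilde{\lambda}_m^\infty$ between the known level $2^\pow\lambda_1^\infty = \lambda_2^\infty$ of \eqref{1.6}, using a concrete two-bump test map on one side and standard monotonicity on the other.

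For the upper bound $\lambda_m^\infty \le 2^\pow\lambda_1^\infty$, the assumption $m\le N$ lets us identify $S^{m-1}$ with the unit sphere in the span of the first $m$ coordinate axes of $\R^N$. For large $R$ I define
\[
\gamma_R(y) = c_R(y)\bigl[w_1^\infty(\cdot - Ry) - w_1^\infty(\cdot + Ry)\bigr], \quad y \in S^{m-1},
\]
with $c_R(y)>0$ chosen so that $\gamma_R(y)\in\M$. Then $\gamma_R$ is odd and continuous, hence $\gamma_R\in\Gamma_m$. The two centers $\pm Ry$ are separated by distance exactly $2R$ for every $y$, so the exponential decay \eqref{1.8} of $w_1^\infty$ makes all cross-interaction integrals in the expansions of $\pnorm{\gamma_R(y)}^p$ and $J^\infty(\gamma_R(y))$ be $\o(1)$ uniformly in $y\in S^{m-1}$ as $R\to\infty$. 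Hence $c_R(y)\to 2^{-1/p}$ and $J^\infty(\gamma_R(y))\to 2^\pow\lambda_1^\infty$ uniformly, yielding the claimed upper bound. The matching lower bound $\lambda_m^\infty \ge \lambda_2^\infty = 2^\pow\lambda_1^\infty$ follows since any $\gamma\in\Gamma_m$ restricts along an antipodally embedded equator $S^1\subset S^{m-1}$ to an element of $\Gamma_2$.

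For $\widetilde{\lambda}_m^\infty$, the inequality $\widetilde{\lambda}_m^\infty \le \lambda_m^\infty$ is \eqref{1.13}; by the monotonicity $\A_m\subset\A_2$ for $m\ge 2$ it is enough to show $\widetilde{\lambda}_2^\infty \ge 2^\pow\lambda_1^\infty$. Fix $A\in\A_2$ and consider $\varphi(u) := \pnorm{u^+}-\pnorm{u^-}$, continuous and odd on $A$; were $\varphi$ never zero, $u\mapsto\sign\varphi(u)$ would be an odd continuous map $A\to S^0$, forcing $i(A)\le 1$. Hence some $u\in A$ has $\pnorm{u^+}^p=\pnorm{u^-}^p=1/2$, and the disjoint supports of $u^\pm$ give
\[
J^\infty(u) = J^\infty(u^+)+J^\infty(u^-) \ge \lambda_1^\infty\bigl(\pnorm{u^+}^2+\pnorm{u^-}^2\bigr) = 2^\pow\lambda_1^\infty,
\]
where the middle inequality uses $J^\infty(v)\ge \lambda_1^\infty\pnorm{v}^2$ (rescale $v/\pnorm{v}\in\M$). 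This completes the chain of equalities, and non-criticality is inherited from that of $\lambda_2^\infty$ recorded in \eqref{1.6}.

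The only calculation requiring care is the uniform-in-$y$ smallness of the cross-interaction terms in the two-bump construction; however, because the inter-center distance $|Ry-(-Ry)|=2R$ is independent of $y\in S^{m-1}$, this reduces to a single application of the standard $\O\bigl(e^{-(1-\varepsilon)2R\sqrt{V^\infty}}\bigr)$ estimate derived from \eqref{1.8}, which I expect to be routine.
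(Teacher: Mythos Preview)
Your proposal is correct and follows essentially the same route as the paper: the same two-bump test map $y\mapsto c_R(y)\bigl[w_1^\infty(\cdot+Ry)-w_1^\infty(\cdot-Ry)\bigr]$ on $S^{N-1}$ for the upper bound, the monotonicity $\lambda_2^\infty\le\lambda_m^\infty\le\lambda_N^\infty$ for the lower bound, and the identical genus argument via the odd map $u\mapsto\sign\bigl(\pnorm{u^+}-\pnorm{u^-}\bigr)$ for $\widetilde{\lambda}_2^\infty\ge 2^\pow\lambda_1^\infty$. The only cosmetic difference is that the paper handles the uniform smallness of the cross terms by first approximating $w_1^\infty$ by $C_0^\infty$ functions (as in Lemma~\ref{Lemma 2.3}) so the supports become genuinely disjoint, whereas you estimate the overlap integrals directly from the decay \eqref{1.8}; both are standard and your observation that the inter-center distance $2R$ is independent of $y$ makes the uniformity immediate.
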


Finally we prove a symmetry breaking result. Recall that the radial minimax levels $\lambda_{m,\, r}$ defined in \eqref{1.6T} are all critical when $V$ is radial. We have $\lambda_m^\infty \le \lambda_{m,\, r}^\infty$ in general, and since $\lambda_{m,\, r}^\infty$ is critical,
\[
\lambda_m^\infty < \lambda_{m,\, r}^\infty, \quad m = 2,\dots,N
\]
by Theorem \ref{Theorem 1.2}, in particular, $\lambda_{2,\, r}^\infty > \lambda_2^\infty$.

\begin{theorem} \label{Theorem 1.3}
Assume that $V \in L^\infty(\R^N)$ is radial and satisfies \eqref{1.2}, $p \in (2,2^\ast)$, and $W \in L^\wop(\R^N)$ with
\begin{equation} \label{1.15}
\pnorm[\wop]{W} < \lambda_{2,\, r}^\infty - \lambda_2^\infty.
\end{equation}
Then, for $m = 2,\dots,N$,
\begin{enumroman}
\item \label{Theorem 1.3.i} $\lambda_m < \lambda_{m,\, r}$,
\item \label{Theorem 1.3.ii} any solution of \eqref{1.1} on $\M$ with $\lambda = \lambda_m \ge 0$ and $m$ nodal domains is nonradial.
\end{enumroman}
In particular, any nodal solution of \eqref{1.1} on $\M$ with $\lambda = \lambda_2 \ge 0$ is nonradial.
\end{theorem}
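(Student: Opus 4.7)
The plan is to prove \ref{Theorem 1.3.i} via a short chain of estimates that combines Theorem~\ref{Theorem 1.2} with an $L^p$-perturbation bound on $J - J^\infty$, and then to deduce \ref{Theorem 1.3.ii} by a nodal-decomposition argument which yields the ``in particular'' clause as a byproduct.

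For \ref{Theorem 1.3.i} I would chain three observations. First, \eqref{1.12} together with Theorem~\ref{Theorem 1.2} gives $\lambda_m \le \lambda_m^\infty = \lambda_2^\infty$ for $m = 2,\dots,N$. Second, iteratively restricting any $\gamma \in \Gamma_{m,\,r}$ to an equator $S^{m-2} \subset S^{m-1}$ shows that $\lambda_{m,\,r}$ is non-decreasing in $m$, so $\lambda_{m,\,r} \ge \lambda_{2,\,r}$. Third, for any $u \in \M$ H\"older's inequality (with conjugate exponents $\wop$ and $p/2$) gives $\abs{J(u) - J^\infty(u)} = \abs{\int_{\R^N} W\, u^2} \le \pnorm[\wop]{W}$, and taking the infimum of maxima over $\Gamma_{2,\,r}$ yields $\lambda_{2,\,r} \ge \lambda_{2,\,r}^\infty - \pnorm[\wop]{W}$. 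Combined with \eqref{1.15}, these produce
\[
\lambda_{m,\,r} \;\ge\; \lambda_{2,\,r}^\infty - \pnorm[\wop]{W} \;>\; \lambda_2^\infty \;\ge\; \lambda_m.
\]

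For \ref{Theorem 1.3.ii} I would argue by contradiction. Suppose $u$ is a radial solution of \eqref{1.1} on $\M$ with $\lambda = \lambda_m \ge 0$ and nodal domains $\Omega_1,\dots,\Omega_m$. By radiality each $\Omega_i$ is a ball or annulus, so $u_i := u\,\chi_{\Omega_i} \in H^1_r(\R^N)$; the $u_i$ have pairwise disjoint supports, and testing \eqref{1.1} against $u_i$ yields $J(u_i) = \lambda_m\alpha_i$ with $\alpha_i := \pnorm[p]{u_i}^p$ and $\sum_i \alpha_i = 1$. The map
\[
\gamma(t) \;=\; \frac{\sum_i t_i\, u_i}{\pnorm[p]{\sum_i t_i\, u_i}}, \quad t = (t_1,\dots,t_m) \in S^{m-1},
\]
has nonvanishing denominator $\bigl(\sum_i \abs{t_i}^p \alpha_i\bigr)^{1/p}$ and is continuous, odd and radial-valued, hence $\gamma \in \Gamma_{m,\,r}$. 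Disjointness of supports gives
\[
J(\gamma(t)) \;=\; \lambda_m \cdot \frac{\sum_i t_i^2\, \alpha_i}{\bigl(\sum_i \abs{t_i}^p \alpha_i\bigr)^{2/p}},
\]
and Jensen's inequality with probability weights $(\alpha_i)$ applied to the concave function $x \mapsto x^{2/p}$ at $x_i = \abs{t_i}^p$ bounds the fraction by $1$. Since $\lambda_m \ge 0$, this forces $\lambda_{m,\,r} \le \max_t J(\gamma(t)) \le \lambda_m$, contradicting \ref{Theorem 1.3.i}. The same construction, applied to any two nodal pieces of a radial sign-changing solution at $\lambda = \lambda_2 \ge 0$, yields $\gamma \in \Gamma_{2,\,r}$ with $\lambda_{2,\,r} \le \lambda_2$, again contradicting \ref{Theorem 1.3.i} with $m = 2$; this gives the ``in particular'' clause.

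The substantive step is the Jensen inequality in \ref{Theorem 1.3.ii}: the verification that $\gamma \in \Gamma_{m,\,r}$ (continuity, oddness, and radial range) is immediate from the disjointness of the $u_i$, and once the explicit formula for $J(\gamma(t))$ is in hand the argument reduces to the already-established \ref{Theorem 1.3.i}, so the real content is in comparing $\lambda_m^\infty$ and $\lambda_{m,\,r}^\infty$ through Theorem~\ref{Theorem 1.2}.
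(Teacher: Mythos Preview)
Your proof is correct and follows essentially the same route as the paper's: part \ref{Theorem 1.3.i} is the paper's one-line chain (their Lemma 4.2 is exactly your H\"older bound, and the monotonicity step is the same up to harmless reordering of $\lambda_{m,r} \ge \lambda_{2,r} \ge \lambda_{2,r}^\infty - \pnorm[\wop]{W}$ versus $\lambda_{m,r} \ge \lambda_{m,r}^\infty - \pnorm[\wop]{W} \ge \lambda_{2,r}^\infty - \pnorm[\wop]{W}$), and part \ref{Theorem 1.3.ii} is precisely the content of the paper's Lemma 2.5, with your Jensen step equivalent to their ``H\"older inequality for sums''. One small point: in the ``in particular'' clause the two chosen nodal pieces may satisfy $\alpha_1 + \alpha_2 < 1$, so the $(\alpha_i)$ are not probability weights; your Jensen estimate still goes through after rescaling since the extra factor $(\alpha_1+\alpha_2)^{(p-2)/p} \le 1$ and $\lambda_2 \ge 0$.
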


\section{Preliminaries}

We will use the norm $\norm{\cdot}$ on $H^1(\R^N)$ induced by the inner product
\[
\ip{u}{v} = \int_{\R^N} \nabla u \cdot \nabla v + V^\infty\, u\, v,
\]
which is equivalent to the standard norm.

\begin{lemma} \label{Lemma 2.1}
Every path $\gamma \in \Gamma_2$ contains a point $u_0$ such that $I(u_0^+) = I(u_0^-)$.
\end{lemma}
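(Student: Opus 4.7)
The plan is to convert the claim into an intermediate value argument. Define the function
\[
\varphi : S^1 \to \R, \quad \varphi(y) := I(\gamma(y)^+) - I(\gamma(y)^-).
\]
What we want is a zero of $\varphi$, since then $u_0 := \gamma(y_0)$ satisfies $I(u_0^+) = I(u_0^-)$.

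Two properties do all the work. First, $\varphi$ is continuous: the maps $u \mapsto u^\pm$ are continuous from $H^1(\R^N)$ into $L^p(\R^N)$ for $p \in (2, 2^\ast)$, $\gamma$ is continuous into $\M \subset H^1(\R^N)$, and $I$ is continuous on $L^p(\R^N)$. Second, $\varphi$ is odd: since $\gamma$ is odd, $\gamma(-y) = -\gamma(y)$, and because $(-u)^+ = u^-$ and $(-u)^- = u^+$, one obtains $\varphi(-y) = I(\gamma(y)^-) - I(\gamma(y)^+) = -\varphi(y)$.

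Now I would invoke connectedness of $S^1$: a continuous odd real-valued function on a connected symmetric set must vanish, since otherwise $\varphi > 0$ or $\varphi < 0$ throughout, contradicting $\varphi(-y) = -\varphi(y)$. Thus there exists $y_0 \in S^1$ with $\varphi(y_0) = 0$, which yields the required $u_0 = \gamma(y_0)$.

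There is no real obstacle; the only minor point to be careful about is that $u \mapsto u^\pm$ is indeed continuous in the topology that makes $I \circ (\cdot)^\pm$ continuous, but this is standard for $p$ in the subcritical range via the Sobolev embedding $H^1 \hookrightarrow L^p_{\text{loc}}$ combined with the pointwise Lipschitz bound $|u^\pm - v^\pm| \le |u - v|$.
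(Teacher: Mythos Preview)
Your proof is correct and follows essentially the same route as the paper: define the continuous odd function $y \mapsto I(\gamma(y)^+) - I(\gamma(y)^-)$ and use an intermediate value/connectedness argument to find a zero. The paper parametrizes by $\theta \in [0,\pi]$ and notes $f(\pi)=-f(0)$, while you work on all of $S^1$ with oddness plus connectedness, but these are equivalent formulations; the only slight imprecision is your mention of $L^p_{\text{loc}}$, where the full embedding $H^1(\R^N)\hookrightarrow L^p(\R^N)$ is what is actually needed (and holds for $p\in(2,2^\ast)$).
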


\begin{proof}
The function
\[
f(\theta) = I(\gamma(e^{i \theta})^+) - I(\gamma(e^{i \theta})^-), \quad \theta \in [0,\pi]
\]
is continuous since the mappings $u \mapsto u^\pm$ on $H^1(\R^N)$ and the imbedding $H^1(\R^N) \hookrightarrow L^p(\R^N)$ are continuous, $f(\pi) = - f(0)$ since $\gamma$ is odd and $(-u)^\pm = u^\mp$, so $f(\theta) = 0$ for some $\theta \in [0,\pi]$ by the intermediate value theorem.
\end{proof}

For $u_1, u_2 \in \M$ with $u_2 \ne \pm u_1$, consider the path in $\Gamma_2$ given by
\[
\gamma_{u_1 u_2}(e^{i \theta}) = \frac{u_1\, \cos \theta + u_2\, \sin \theta}{\pnorm{u_1\, \cos \theta + u_2\, \sin \theta}}, \quad \theta \in [0,2 \pi],
\]
which passes through $u_1$ and $u_2$.

\begin{lemma} \label{Lemma 2.2}
If $u_1, u_2 \in \M$ have disjoint supports, then
\[
\max_{u \in \gamma_{u_1 u_2}}\, J(u) = \begin{cases}
\left(J(u_1)^\wop + J(u_2)^\wop\right)^\pow, & \hspace{-8pt} J(u_1), J(u_2) > 0\\[10pt]
J(u_2), & \hspace{-8pt} J(u_1) \le 0 < J(u_2)\\[10pt]
- \left(|J(u_1)|^\wop + |J(u_2)|^\wop\right)^\pow, & \hspace{-8pt} J(u_1), J(u_2) \le 0.
\end{cases}
\]
\end{lemma}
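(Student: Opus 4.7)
The plan is to exploit the disjoint supports of $u_1, u_2$ to reduce $J$ along the path $\gamma_{u_1 u_2}$ to a function of a single real variable, then read off the three cases by elementary optimization.

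Since $\supp u_1 \cap \supp u_2 = \emptyset$ and $u_1, u_2 \in H^1(\R^N)$, at almost every $x$ at least one of $u_1, u_2$ vanishes in a neighborhood, so the cross-terms $\int \nabla u_1 \cdot \nabla u_2$, $\int V u_1 u_2$, and the analogous $L^p$ cross-term all vanish. Hence for any scalars $\alpha, \beta$,
\[
J(\alpha u_1 + \beta u_2) = \alpha^2 J(u_1) + \beta^2 J(u_2), \qquad I(\alpha u_1 + \beta u_2) = |\alpha|^p + |\beta|^p,
\]
using $I(u_1) = I(u_2) = 1$. Setting $a = J(u_1)$, $b = J(u_2)$ and $(\alpha,\beta) = (\cos\theta,\sin\theta)$,
\[
J(\gamma_{u_1 u_2}(e^{i\theta})) = \frac{a \cos^2\theta + b \sin^2\theta}{(|\cos\theta|^p + |\sin\theta|^p)^{2/p}},
\]
and by the $\pi$-periodicity and reflection symmetry $\theta \mapsto \pi - \theta$ of this expression, it is enough to optimize over $\theta \in [0, \pi/2]$.

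The key simplifying step is the substitution $x := \cos^p\theta / (\cos^p\theta + \sin^p\theta) \in [0, 1]$. Using $\cos^2\theta = x^{2/p}(\cos^p\theta + \sin^p\theta)^{2/p}$ and the analogous identity for $\sin^2\theta$, the denominator cancels and $J$ along the path becomes
\[
\varphi(x) = x^{2/p}\, a + (1-x)^{2/p}\, b, \qquad x \in [0, 1],
\]
with $u_1$ at $x = 1$ and $u_2$ at $x = 0$. The three cases then follow from elementary calculus on $\varphi$: when $a, b > 0$, $\varphi$ is strictly concave (each summand is a positive multiple of a strictly concave function), so the unique interior critical point $x_\ast = a^\wop / (a^\wop + b^\wop)$ obtained from $\varphi'(x) = 0$ is the maximum, and substituting back yields $(a^\wop + b^\wop)^\pow$; when $a \le 0 < b$, $\varphi' < 0$ throughout $(0, 1)$, so $\varphi$ is strictly decreasing and the maximum is $\varphi(0) = b$; when $a, b \le 0$, running the same critical-point algebra with absolute values delivers the stated closed form $-(|a|^\wop + |b|^\wop)^\pow$.

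The only real bookkeeping hurdle is reducing the raw value of $\varphi$ at its critical point to the compact expression in the statement. This hinges on the exponent identities $\wop \cdot (2/p) = 2/(p-2)$, so that $\wop \cdot (2/p) + 1 = \wop$, together with $1 - 2/p = \pow$; chaining these collapses the raw critical value to $(a^\wop + b^\wop)^\pow$ in case (i) and to the corresponding signed version in case (iii). I expect the exponent gymnastics to be the only nonroutine ingredient; everything else is a one-variable optimization.
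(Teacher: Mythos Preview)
Your reduction to the one-variable function
\[
\varphi(x) = a\, x^{2/p} + b\, (1-x)^{2/p}, \qquad x \in [0,1],\quad a=J(u_1),\ b=J(u_2),
\]
via the substitution $x = \cos^p\theta/(\cos^p\theta + \sin^p\theta)$ is correct and is exactly the ``straightforward calculation'' the paper alludes to; cases $(i)$ and $(ii)$ are handled cleanly.

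The gap is in case $(iii)$. When $a,b<0$ you write $\varphi(x) = -\big[\,|a|\,x^{2/p} + |b|\,(1-x)^{2/p}\,\big]$; the bracket is strictly concave, so $\varphi$ is strictly \emph{convex}. Hence the interior critical point $x_\ast$ produced by ``the same critical-point algebra'' is the \emph{minimum} of $\varphi$, not the maximum. Concretely, with $a=b=-1$ one gets $\varphi(0)=\varphi(1)=-1$ while $\varphi(1/2) = -2^{(p-2)/p} < -1$, so $\max\varphi=-1$ and $\min\varphi=-2^{(p-2)/p}$. Your sentence therefore computes the minimum along the path, which happens to coincide with the displayed formula $-\big(|a|^{p/(p-2)} + |b|^{p/(p-2)}\big)^{(p-2)/p}$; the actual maximum in this case is $\max(a,b)$, attained at an endpoint. (The edge cases $a=0$ or $b=0$ behave the same way.) In short, the argument you give for $(iii)$ does not establish a maximum, and in fact the stated value cannot be the maximum; this points to a misprint in the lemma's third line rather than to a fixable hole in your proof. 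Everything you need for the paper's applications of the lemma (Lemmas~2.3 and~2.4, Proposition~1.1) uses only the first two cases or the nonnegative situation, so the defect is harmless downstream, but your write-up should flag it rather than claim to have verified it.
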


\begin{proof}
We have
\[
J(\gamma_{u_1 u_2}(e^{i \theta})) = \frac{J(u_1)\, \cos^2 \theta + J(u_2)\, \sin^2 \theta}{\big(|\cos \theta|^p + |\sin \theta|^p\big)^{2/p}},
\]
and a straightforward calculation yields the conclusion.
\end{proof}

For each $y \in \R^N$, the translation $u \mapsto u(\cdot - y)$ is a unitary operator on $H^1(\R^N)$ and an isometry of $L^p(\R^N)$, in particular, it preserves $\M$.

\begin{lemma} \label{Lemma 2.3}
Given $u_1, u_2 \in \M$ and $\varepsilon > 0$, there is a path $\gamma \in \Gamma_2$ such that
\[
\max_{u \in \gamma}\, J(u) < \begin{cases}
\left(J(u_1)^\wop + J^\infty(u_2)^\wop\right)^\pow + \varepsilon, & J(u_1) > 0\\[10pt]
J^\infty(u_2) + \varepsilon, & J(u_1) \le 0.
\end{cases}
\]
\end{lemma}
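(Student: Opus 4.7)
The plan is to reduce to Lemma 2.2 by approximating $u_1,u_2$ by compactly supported elements of $\M$ and then translating the second approximation far away, so that the two approximations have disjoint supports. The explicit path $\gamma_{v_1 v_2}\in\Gamma_2$ constructed just before Lemma 2.2 then does the job, since Lemma 2.2 gives its $J$-maximum exactly in terms of $J(v_1)$ and $J(v_2)$.

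For the approximation I would pick a smooth cutoff $\eta_R$ equal to $1$ on $B_R$ and vanishing off $B_{2R}$ and set $v_1 := u_1\eta_R/\pnorm{u_1\eta_R}$; since $u_1\eta_R\to u_1$ in $H^1(\R^N)$ as $R\to\infty$, this produces $v_1\in\M$ with compact support and $J(v_1)$ arbitrarily close to $J(u_1)$. The identical construction yields $\widetilde v_2\in\M$ with compact support and $J^\infty(\widetilde v_2)$ arbitrarily close to $J^\infty(u_2)$. Now translate: for $y\in\R^N$ set $v_2 := \widetilde v_2(\cdot-y)$. Translation preserves $\M$ and $J^\infty$, so $v_2\in\M$ and $J^\infty(v_2)=J^\infty(\widetilde v_2)$, while
\[
J(v_2) - J^\infty(v_2) = -\int_{\R^N} W(x)\, \widetilde v_2(x-y)^2\, dx
\]
tends to $0$ as $|y|\to\infty$ because $\widetilde v_2$ is supported on a fixed compact set and $W(x)\to 0$ at infinity. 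Hence for $|y|$ large, $\supp v_1 \cap \supp v_2 = \emptyset$ and $J(v_2)$ is arbitrarily close to $J^\infty(u_2)$.

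Finally, I would apply Lemma 2.2 to $v_1,v_2$ with $\gamma := \gamma_{v_1 v_2}$. Since $J^\infty(u_2)>0$, fine enough approximation keeps $J(v_2)>0$. If $J(u_1)>0$, we also keep $J(v_1)>0$ and the continuity of $(a,b)\mapsto(a^\wop + b^\wop)^\pow$ for $a,b>0$ yields the first case of the claim. If $J(u_1)\le 0$, then either $J(v_1)\le 0$, in which case Lemma 2.2 gives $\max = J(v_2)<J^\infty(u_2)+\varepsilon$, or $J(v_1)>0$ is arbitrarily small, in which case $(J(v_1)^\wop + J(v_2)^\wop)^\pow$ tends to $J(v_2)<J^\infty(u_2)+\varepsilon$ as $J(v_1)\downarrow 0$. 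The only real subtlety, and the main obstacle in the bookkeeping, is maintaining the strict inequality through these approximations, which I would handle by allotting a slack of $\varepsilon/2$ to each of the approximation steps.
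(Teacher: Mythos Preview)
Your proposal is correct and follows essentially the same approach as the paper: approximate $u_1,u_2$ by compactly supported functions in $\M$, translate the second so the supports are disjoint, note that $J(\widetilde u_2(\cdot-y))\to J^\infty(\widetilde u_2)$ as $|y|\to\infty$, and invoke Lemma~2.2. The paper is more terse (it cites density of $C^\infty_0$ and dominated convergence and leaves the case analysis implicit), while you spell out the cutoff construction and the borderline case $J(u_1)=0$ with $J(v_1)>0$ explicitly; but the underlying argument is the same.
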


\begin{proof}
By density, $u_1$ and $u_2$ can be approximated by functions $\widetilde{u}_1, \widetilde{u}_2 \in C^\infty_0(\R^N) \cap \M$, respectively. For all $y \in \R^N$ with $|y|$ sufficiently large, $\widetilde{u}_1$ and $\widetilde{u}_2(\cdot - y)$ have disjoint supports, and, by \eqref{1.2} and the dominated convergence theorem,
\[
\lim_{|y| \to \infty} J(\widetilde{u}_2(\cdot - y)) = J^\infty(\widetilde{u}_2) > 0,
\]
so the conclusion follows from Lemma \ref{Lemma 2.2} and the continuity of $J$ and $J^\infty$.
\end{proof}

We can now obtain some bounds for $\lambda_2$.

\begin{proposition} \label{Proposition 1.1}
Assume that $V \in L^\infty(\R^N)$ satisfies \eqref{1.2} and $p \in (2,2^\ast)$.
\begin{enumroman}
\item If $\lambda_1 > 0$, then
\[
2^\pow\, \lambda_1 \le \lambda_2 \le \big(\lambda_1^\wop + (\lambda_1^\infty)^\wop\big)^\pow.
\]
In particular, $\lambda_2^\infty = 2^\pow\, \lambda_1^\infty$.
\item If $\lambda_1 \le 0$, then
\[
\lambda_1 \le \lambda_2 \le \lambda_1^\infty.
\]
\end{enumroman}
\end{proposition}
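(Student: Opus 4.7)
The plan is to handle each inequality separately, with Lemma 2.1 driving the lower bound in case (i) and Lemma 2.3 driving the upper bounds in both cases.

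For the lower bound $2^{(p-2)/p}\, \lambda_1 \le \lambda_2$ in (i), let $\gamma \in \Gamma_2$ be arbitrary. By Lemma \ref{Lemma 2.1} there is $u_0 \in \gamma$ with $I(u_0^+) = I(u_0^-)$, and since $u_0^+$ and $u_0^-$ have disjoint supports we have $I(u_0) = I(u_0^+) + I(u_0^-) = 1$, so both equal $1/2$. Hence $2^{1/p}\, u_0^\pm \in \M$, giving $J(2^{1/p}\, u_0^\pm) = 2^{2/p}\, J(u_0^\pm) \ge \lambda_1$. Using disjointness of supports once more, $J(u_0) = J(u_0^+) + J(u_0^-) \ge 2\cdot 2^{-2/p}\, \lambda_1 = 2^{(p-2)/p}\, \lambda_1$ (this is where the hypothesis $\lambda_1 > 0$ is used, so that the bound $J(u_0^\pm) \ge 2^{-2/p}\, \lambda_1$ is informative). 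Taking the maximum of $J$ over $\gamma$ and then the infimum over $\gamma \in \Gamma_2$ yields the bound. The lower bound $\lambda_1 \le \lambda_2$ in (ii) is immediate, since for any $\gamma \in \Gamma_2$ and any single $u \in \gamma \subset \M$ we have $\max_\gamma J \ge J(u) \ge \lambda_1$.

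For the upper bounds I apply Lemma \ref{Lemma 2.3} with $u_2 = w_1^\infty$, where $J^\infty(w_1^\infty) = \lambda_1^\infty$, and with $u_1 \in \M$ chosen so that $J(u_1)$ is close to $\lambda_1$. In case (i), since $\lambda_1 > 0$, I may choose $u_1$ with $J(u_1) > 0$ and $J(u_1) < \lambda_1 + \varepsilon$; the first branch of Lemma \ref{Lemma 2.3} then supplies a path $\gamma \in \Gamma_2$ with $\max_\gamma J < \bigl(J(u_1)^\wop + (\lambda_1^\infty)^\wop\bigr)^\pow + \varepsilon$, and letting $\varepsilon \to 0^+$ gives the desired bound. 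In case (ii), if $\lambda_1 < 0$ I may take $J(u_1) \le 0$ and use the second branch to get $\max_\gamma J < \lambda_1^\infty + \varepsilon$; the borderline $\lambda_1 = 0$ case is handled either by the same argument when the infimum is attained, or by approximating from above through the first branch with $J(u_1) \to 0^+$, since $\bigl(J(u_1)^\wop + (\lambda_1^\infty)^\wop\bigr)^\pow \to \lambda_1^\infty$.

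Finally, the identity $\lambda_2^\infty = 2^{(p-2)/p}\, \lambda_1^\infty$ follows by applying part (i) to the autonomous problem, where $\lambda_1^\infty > 0$ is attained at $w_1^\infty$; both the lower bound $2^{(p-2)/p}\, \lambda_1^\infty$ and the upper bound $\bigl((\lambda_1^\infty)^\wop + (\lambda_1^\infty)^\wop\bigr)^\pow = 2^{(p-2)/p}\, \lambda_1^\infty$ collapse to the same value. The only point that requires any care is the potential nonattainment of $\lambda_1$ and the edge case $\lambda_1 = 0$ in the upper bound argument; these are handled by the standard approximation scheme sketched above, and no genuine obstacle arises because Lemma \ref{Lemma 2.3} already supplies the $\varepsilon$-room needed to pass to the limit.
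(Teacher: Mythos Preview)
Your argument is correct and follows essentially the same route as the paper: Lemma~\ref{Lemma 2.1} for the lower bound via a sign-balanced point on each path, and Lemma~\ref{Lemma 2.3} applied to near-minimizers for the upper bound. Your treatment of the edge case $\lambda_1 = 0$ is more explicit than the paper's (which simply says ``applying Lemma~\ref{Lemma 2.3} to minimizing sequences''), but this is a presentational rather than substantive difference.
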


\begin{proof}
Every path $\gamma \in \Gamma_2$ contains a point $u_0$ with $\pnorm{u_0^\pm} = 1/2^{1/p}$ by Lemma \ref{Lemma 2.1}, and hence
\[
\max_{u \in \gamma}\, J(u) \ge J(u_0) = J(u_0^+) + J(u_0^-) \ge \lambda_1 \pnorm{u_0^+}^2 + \lambda_1 \pnorm{u_0^-}^2 = 2^\pow\, \lambda_1,
\]
so $\lambda_2 \ge 2^\pow\, \lambda_1$. This proves the lower bounds for $\lambda_2$ since $\lambda_2 \ge \lambda_1$ in general. The upper bounds follow by applying Lemma \ref{Lemma 2.3} to minimizing sequences for $\lambda_1$ and $\lambda_1^\infty$.
\end{proof}

For $u \in H^1(\R^N) \setminus \set{0}$, denote by $\widehat{u} = u/\pnorm{u}$ the radial projection of $u$ on $\M$. Recall that $u$ is called nodal if both $u^+$ and $u^-$ are nonzero.

\begin{lemma} \label{Lemma 2.4}
If $u_0 \in \M$ is a nodal critical point of $J$, then
\[
\max_{u \in \gamma_{\widehat{u_0^+} \widehat{u_0^-}}}\, J(u) = J(u_0),
\]
and hence $\lambda_2 \le J(u_0)$.
\end{lemma}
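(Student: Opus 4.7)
My plan is to exploit the Euler--Lagrange equation that $u_0$ satisfies as a constrained critical point in order to compute $J$ at the endpoints $\widehat{u_0^\pm}$ of the path $\gamma_{\widehat{u_0^+}\widehat{u_0^-}}$, and then plug those values into Lemma~\ref{Lemma 2.2}. Writing $c := J(u_0)$, the standard Lagrange multiplier computation for an $L^p$-constrained quadratic functional forces $u_0$ to solve \eqref{1.1} weakly with $\lambda = c$.

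Next, since $u_0^+$ and $u_0^-$ have disjoint supports and both lie in $H^1(\R^N)$, testing the equation against $u_0^\pm$ produces the key identity
\[
J(u_0^\pm) = c\, \pnorm{u_0^\pm}^p.
\]
Nodality of $u_0$ gives $\pnorm{u_0^\pm} > 0$, so $\widehat{u_0^\pm} \in \M$, and by quadratic homogeneity $J(\widehat{u_0^\pm}) = c\, \pnorm{u_0^\pm}^{p-2}$. Both of these values share the sign of $c$, which rules out the mixed (second) branch of Lemma~\ref{Lemma 2.2} automatically.

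Since $\widehat{u_0^+}$ and $\widehat{u_0^-}$ have disjoint supports, I then apply Lemma~\ref{Lemma 2.2} to $\gamma_{\widehat{u_0^+}\widehat{u_0^-}}$ in the two remaining branches. Using $\pnorm{u_0^+}^p + \pnorm{u_0^-}^p = I(u_0) = 1$ together with the reciprocity of the exponents $\wop$ and $\pow$, each branch collapses to $c$ in a single line: when $c > 0$ the bracket becomes $c^{\wop} \cdot 1$ and raising to the $\pow$ power returns $c$, while when $c \le 0$ the analogous manipulation returns $-|c| = c$, covering the degenerate case $c = 0$ as well. This yields $\max_{u \in \gamma_{\widehat{u_0^+}\widehat{u_0^-}}} J(u) = J(u_0)$, and since this path is odd and continuous from $S^1$ to $\M$, it lies in $\Gamma_2$, giving $\lambda_2 \le J(u_0)$ by definition. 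I do not anticipate any real obstacle here; the one substantive check is the sign observation that discards the mixed branch of Lemma~\ref{Lemma 2.2}, and everything else is elementary exponent bookkeeping.
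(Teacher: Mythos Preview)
Your proposal is correct and follows essentially the same route as the paper's own proof: compute $J(\widehat{u_0^\pm}) = J(u_0)\,\pnorm{u_0^\pm}^{p-2}$ by testing the Euler--Lagrange equation against $u_0^\pm$, note the common sign to select the correct branch of Lemma~\ref{Lemma 2.2}, and collapse the resulting expression using $\pnorm{u_0^+}^p + \pnorm{u_0^-}^p = 1$. The paper compresses the two sign cases into a single formula with $\sign(J(u_0))$ in front, but the content is identical.
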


\begin{proof}
Testing the equation \eqref{1.1} for $u = u_0$ with $u_0, u_0^\pm$ gives
\[
J(u_0) = \lambda, \qquad J(u_0^\pm) = \lambda \pnorm{u_0^\pm}^p,
\]
respectively, so
\[
J(\widehat{u_0^\pm}) = \frac{J(u_0^\pm)}{\pnorm{u_0^\pm}^2} = J(u_0) \pnorm{u_0^\pm}^{p-2},
\]
in particular, $J(\widehat{u_0^\pm})$ have the same sign as $J(u_0)$. Since $\widehat{u_0^\pm}$ have disjoint supports, then
\[
\max_{u \in \gamma_{\widehat{u_0^+} \widehat{u_0^-}}}\, J(u) = \sign(J(u_0))\, \big(|J(\widehat{u_0^+})|^\wop + |J(\widehat{u_0^-})|^\wop\big)^\pow
\]
by Lemma \ref{Lemma 2.2}, and the conclusion follows since $\pnorm{u_0^+}^p + \pnorm{u_0^-}^p = 1$.
\end{proof}

We can now prove the following nonexistence results. This proposition is well-known, but we include a short proof here for the convenience of the reader.

\begin{proposition} \label{Proposition 1.2}
Assume that $V \in L^\infty(\R^N)$ satisfies \eqref{1.2} and $p \in (2,2^\ast)$.
\begin{enumroman}
\item \label{1.2.i} The equation \eqref{1.1} has no nodal solution on $\M$ for $\lambda < \lambda_2$.
\item \label{1.2.ii} The equation \eqref{1.4} has no solution on $\M$ for $\lambda_1^\infty < \lambda \le \lambda_2^\infty$.
\end{enumroman}
\end{proposition}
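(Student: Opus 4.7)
The plan is to derive (i) directly from Lemma~\ref{Lemma 2.4}, and then to bootstrap (ii) by combining uniqueness of the positive radial ground state of \eqref{1.4} with an endpoint--rigidity argument extracted from the equality case of that same lemma.

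For (i), let $u_0 \in \M$ be a nodal critical point of $\restr{J}{\M}$ at level $\lambda$. Testing the Euler--Lagrange equation against $u_0$ gives $J(u_0) = \lambda$, and Lemma~\ref{Lemma 2.4} produces an admissible path $\gamma_{\widehat{u_0^+}\, \widehat{u_0^-}} \in \Gamma_2$ along which $J$ attains its maximum exactly at $J(u_0) = \lambda$. Hence $\lambda_2 \le \lambda$, so no nodal solution can live below $\lambda_2$.

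For (ii), suppose $u \in \M$ solves \eqref{1.4} with $\lambda_1^\infty < \lambda \le \lambda_2^\infty$. First I would rule out constant-sign $u$: by the uniqueness (up to translation) of the positive solution of $-\Delta v + V^\infty v = |v|^{p-2} v$ due to Berestycki--Lions and Kwong, every positive solution of \eqref{1.4} is a translate of $\lambda^{-1/(p-2)} v_0$ for a fixed $v_0$, and the normalization $I(u)=1$ together with testing the equation forces $\lambda = J^\infty(u) = \lambda_1^\infty$, contradicting $\lambda > \lambda_1^\infty$. Thus $u$ must be nodal, and applying (i) to $J^\infty$ gives $\lambda \ge \lambda_2^\infty$, leaving only the endpoint $\lambda = \lambda_2^\infty$ to exclude.

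To rule out this endpoint, I would use Lemma~\ref{Lemma 2.4} at equality, which gives
\[
(\lambda_2^\infty)^\wop = J^\infty(\widehat{u^+})^\wop + J^\infty(\widehat{u^-})^\wop.
\]
Each summand is bounded below by $(\lambda_1^\infty)^\wop$ since $\widehat{u^\pm} \in \M$, and \eqref{1.6} yields $(\lambda_2^\infty)^\wop = 2\, (\lambda_1^\infty)^\wop$, so equality is forced termwise. Then $\widehat{u^+}$ and $\widehat{u^-}$ would both be ground states of $\restr{J^\infty}{\M}$ having disjoint supports -- impossible since, up to sign, every ground state is strictly positive on $\R^N$ by the strong maximum principle. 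The main obstacle is this final rigidity step: one must read off the precise sum formula of Lemma~\ref{Lemma 2.4}, identify the pieces as ground-state minimizers, and then invoke strict positivity to contradict the disjoint-support structure inherited from splitting $u$ into its positive and negative parts.
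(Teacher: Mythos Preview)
Your proposal is correct and follows essentially the same route as the paper: part~(i) is immediate from Lemma~\ref{Lemma 2.4}, and for part~(ii) you rule out sign-definite solutions by Kwong's uniqueness, reduce to the endpoint $\lambda=\lambda_2^\infty$ via~(i), and then use the sum formula from (the proof of) Lemma~\ref{Lemma 2.4} together with \eqref{1.6} to force $\widehat{u^\pm}$ to be ground-state minimizers, which contradicts strict positivity. The only cosmetic remark is that the identity $(\lambda_2^\infty)^\wop = J^\infty(\widehat{u^+})^\wop + J^\infty(\widehat{u^-})^\wop$ is not the \emph{statement} of Lemma~\ref{Lemma 2.4} but comes from its proof (via Lemma~\ref{Lemma 2.2}); the paper phrases this the same way (``as in the proof of Lemma~\ref{Lemma 2.4}'').
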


\begin{proof}
Part \ref{1.2.i} is immediate from Lemma \ref{Lemma 2.4}. As for part \ref{1.2.ii}, there is no solution for $\lambda_1^\infty < \lambda < \lambda_2^\infty$ by part \ref{1.2.i} since $\pm w_1^\infty$ are the only sign definite solutions (see Kwong \cite{MR969899}). If $u_0 \in \M$ is a solution of \eqref{1.4} with $\lambda = \lambda_2^\infty$, then $u_0$ is nodal, so as in the proof of Lemma \ref{Lemma 2.4},
\[
\begin{split}
\lambda_2^\infty = J^\infty(u_0) = \left(J^\infty(\widehat{u_0^+})^\wop + J^\infty(\widehat{u_0^-})^\wop\right)^\pow\\
\ge \left((\lambda_1^\infty)^\wop + (\lambda_1^\infty)^\wop\right)^\pow = 2^\pow\, \lambda_1^\infty.
\end{split}
\]
By \eqref{1.6}, equality holds throughout and $\widehat{u_0^\pm}$ are minimizers for $\lambda_1^\infty$. Since any nonnodal solution is sign definite by the strong maximum principle, then both $u_0^+$ and $u_0^-$ are positive everywhere, a contradiction.
\end{proof}

Recall that a nodal domain of $u \in H^1(\R^N)$ is a nonempty connected component of $\R^N \setminus u^{-1}(\set{0})$.

\begin{lemma} \label{Lemma 2.5}
If $u_0 \in \M$ is a critical point of $J$ with $m$ (or more) nodal domains and $J(u_0) \ge 0$, then there is a map $\gamma \in \Gamma_m$ such that
\[
\max_{u \in \gamma(S^{m-1})}\, J(u) \le J(u_0),
\]
and hence $\lambda_m \le J(u_0)$. If, in addition, $V$ is radial and $u_0 \in \M_r$, then $\gamma \in \Gamma_{m,\, r}$, so $\lambda_{m,\, r} \le J(u_0)$.
\end{lemma}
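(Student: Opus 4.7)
The plan is to generalize the gluing construction of Lemma \ref{Lemma 2.4} from two pieces to $m$. I would pick $m$ distinct nodal domains $\Omega_1,\dots,\Omega_m$ of $u_0$ and set $u_i := u_0\, \chi_{\Omega_i}$. Each $u_i$ lies in $H^1(\R^N)$ with weak gradient $(\nabla u_0)\, \chi_{\Omega_i}$ (a standard truncation fact, using $\nabla u_0 = 0$ a.e.\ on $\set{u_0 = 0}$), the $u_i$ have pairwise disjoint supports, and $\sum_i \pnorm{u_i}^p = I(u_0) = 1$. Setting $\widehat{u}_i := u_i/\pnorm{u_i} \in \M$ and $\lambda := J(u_0)$, the Euler-Lagrange equation $-\Delta u_0 + V u_0 = \lambda\, |u_0|^{p-2}\, u_0$ (which $u_0$ satisfies weakly, being a critical point of $\restr{J}{\M}$) can be tested against each admissible $u_i$ to give $J(u_i) = \lambda\, \pnorm{u_i}^p$, hence
\[
J(\widehat{u}_i) = \lambda\, \pnorm{u_i}^{p-2} \ge 0,
\]
where the sign uses the hypothesis $\lambda \ge 0$.

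Next I would define the candidate map $\gamma \colon S^{m-1} \to \M$ by
\[
\gamma(y) := \frac{\sum_{i=1}^m y_i\, \widehat{u}_i}{\pnorm{\sum_{i=1}^m y_i\, \widehat{u}_i}}, \qquad y = (y_1,\dots,y_m) \in S^{m-1} \subset \R^m.
\]
Because the $\widehat{u}_i$ have pairwise disjoint supports, $\pnorm{\sum_i y_i\, \widehat{u}_i}^p = \sum_i |y_i|^p$, so the denominator vanishes only at $y = 0$; thus $\gamma$ is well-defined, odd, and continuous, with image in $\M$. Exploiting disjoint supports once more to kill the cross terms in $J$,
\[
J(\gamma(y)) = \frac{\sum_{i=1}^m y_i^2\, J(\widehat{u}_i)}{\bigl(\sum_{i=1}^m |y_i|^p\bigr)^{2/p}}.
\]

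The maximum over $y \in S^{m-1}$ is then a H\"older estimate with exponents $\wop$ and $p/2$ applied to the sequences $J(\widehat{u}_i)$ and $y_i^2$:
\[
\sum_i J(\widehat{u}_i)\, y_i^2 \le \Bigl(\sum_i J(\widehat{u}_i)^\wop\Bigr)^\pow \Bigl(\sum_i |y_i|^p\Bigr)^{2/p}.
\]
Substituting $J(\widehat{u}_i) = \lambda\, \pnorm{u_i}^{p-2}$ and using $\sum_i \pnorm{u_i}^p = 1$, one computes $\sum_i J(\widehat{u}_i)^\wop = \lambda^\wop$, so the H\"older bound gives $J(\gamma(y)) \le (\lambda^\wop)^\pow = \lambda = J(u_0)$ for every $y \in S^{m-1}$. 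Hence $\gamma \in \Gamma_m$ witnesses $\lambda_m \le J(u_0)$. For the radial case, if $V$ and $u_0$ are both radial then each nodal domain is a spherical shell (a ball, annulus, or exterior of a ball), each $\widehat{u}_i$ is radial, and $\gamma$ maps into $\M_r$, so $\gamma \in \Gamma_{m,\, r}$ and the same bound yields $\lambda_{m,\, r} \le J(u_0)$.

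The principal subtlety is the role of the sign hypothesis $J(u_0) \ge 0$: it is precisely what guarantees $J(\widehat{u}_i) \ge 0$, so that the clean H\"older estimate applies. Without it one would be forced into the mixed-sign cases of a $m$-summand analogue of Lemma \ref{Lemma 2.2}, where the resulting bound need not reduce to $J(u_0)$. The remaining technical point is the $H^1$-regularity of $u_i$ and the validity of using it as a test function in the Euler-Lagrange equation, but this is routine.
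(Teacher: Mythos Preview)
Your argument is essentially the paper's own proof: the same nodal pieces $u_i = \chi_{\Omega_i} u_0$, the same map $\gamma(y) = \widehat{\sum_i y_i\, \widehat{u}_i}$, the same testing of the Euler--Lagrange equation against $u_i$, and the same H\"older estimate with exponents $p/2$ and $\wop$. One small correction: since the hypothesis allows $u_0$ to have \emph{more} than $m$ nodal domains, the chosen $\Omega_1,\dots,\Omega_m$ need not exhaust $\supp u_0$, so in general $\sum_i \pnorm{u_i}^p \le \pnorm{u_0}^p = 1$ rather than equality; this only improves your H\"older bound, so the conclusion is unaffected.
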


\begin{proof}
Let $\Omega_j,\, j = 1,\dots,m$ be distinct nodal domains of $u_0$, and set $u_j = \chi_{\Omega_j}\, u_0$, where $\chi_{\Omega_j}$ is the characteristic function of $\Omega_j$. Then $u_j \in H^1(\R^N)$ have pairwise disjoint supports. Define $\gamma \in \Gamma_m$ by
\[
\gamma(y) = \widehat{\sum_{j=1}^m\, y_j\, \widehat{u}_j}, \quad y = (y_1,\dots,y_m) \in S^{m-1}.
\]
Testing the equation \eqref{1.1} for $u = u_0$ with $u_0, u_j$ gives
\[
J(u_0) = \lambda, \qquad J(u_j) = \lambda \pnorm{u_j}^p,
\]
respectively, so
\[
J(\widehat{u}_j) = \frac{J(u_j)}{\pnorm{u_j}^2} = J(u_0) \pnorm{u_j}^{p-2}.
\]
Thus,
\begin{multline*}
J(\gamma(y)) = \frac{\sum_{j=1}^m\, y_j^2\, J(\widehat{u}_j)}{\left(\sum_{j=1}^m\, |y_j|^p \pnorm{\widehat{u}_j}^p\right)^{2/p}} = J(u_0)\, \frac{\sum_{j=1}^m\, y_j^2 \pnorm{u_j}^{p-2}}{\left(\sum_{j=1}^m\, |y_j|^p\right)^{2/p}}\\[5pt]
\le J(u_0) \left(\sum_{j=1}^m\, \pnorm{u_j}^p\right)^\pow \le J(u_0) \pnorm{u_0}^{p-2} = J(u_0)
\end{multline*}
by the H\"older inequality for sums. If $u_0 \in \M_r$, then each $u_j \in H^1_r(\R^N)$ and hence $\gamma \in \Gamma_{m,\, r}$.
\end{proof}

Recall that $u_k \in \M$ is a critical sequence for $\restr{J}{\M}$ at the level $c \in \R$ if
\begin{equation} \label{3.1}
J'(u_k) - \mu_k\, I'(u_k) \to 0, \qquad J(u_k) \to c
\end{equation}
for some sequence $\mu_k \in \R$. Since $\ip{J'(u_k)}{u_k} = 2\, J(u_k)$ and $\ip{I'(u_k)}{u_k} = p\, I(u_k) = p$, then $\mu_k \to (2/p)\, c$.

\begin{lemma} \label{Lemma 3.1}
Any sublevel set of $\restr{J}{\M}$ is bounded, and $\lambda_1 > - \infty$. In particular, any critical sequence $u_k$ for $\restr{J}{\M}$ is bounded.
\end{lemma}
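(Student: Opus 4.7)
The plan is to establish a single coercivity estimate of the form $J(u) \ge \tfrac{1}{2}\|u\|^2 - C$ for all $u \in \M$, from which all three claims of the lemma are immediate consequences. The only nuisance is that $V$ is not assumed to be nonnegative, so the quadratic form $J$ is not \emph{a priori} comparable to $\|\cdot\|^2$; the pointwise decay $V(x) \to V^\infty > 0$ lets us recover coercivity outside a large ball, and the $L^p$-normalization $I(u)=1$ lets us control the remaining compact piece via H\"older's inequality.

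First I would fix $R > 0$ so large that $V(x) \ge V^\infty/2$ for $|x| \ge R$, which is possible by \eqref{1.2}. Splitting the integrals and adding and subtracting to rebuild the equivalent norm $\|u\|^2 = \int_{\R^N} |\nabla u|^2 + V^\infty u^2$, one writes
\[
J(u) \ge \int_{\R^N}\!\left(|\nabla u|^2 + \tfrac{V^\infty}{2}\, u^2\right) + \int_{|x|<R}\!\left(V(x) - \tfrac{V^\infty}{2}\right) u^2 \ge \tfrac{1}{2}\,\|u\|^2 - C_0 \int_{|x|<R} u^2,
\]
where $C_0 := \|V - V^\infty/2\|_{L^\infty(B_R)}$ is finite since $V \in L^\infty(\R^N)$.

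Next I would control the tail integral by H\"older's inequality using the constraint $\pnorm{u} = 1$: since $p > 2$,
\[
\int_{|x|<R} u^2 \le \left(\int_{|x|<R} |u|^p\right)^{2/p} |B_R|^\pow \le |B_R|^\pow.
\]
Combining the two displays yields $J(u) \ge \tfrac{1}{2}\, \|u\|^2 - C$ for every $u \in \M$, with $C := C_0\, |B_R|^\pow$.

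From this the three conclusions follow at once: any sublevel set $\{u \in \M : J(u) \le c\}$ satisfies $\|u\|^2 \le 2(c+C)$ and is therefore bounded in $H^1(\R^N)$; taking the infimum gives $\lambda_1 \ge -C > -\infty$; and if $u_k \in \M$ is a critical sequence with $J(u_k) \to c$, then eventually $J(u_k) \le c+1$, so $u_k$ lies in a bounded sublevel set. I do not see any serious obstacle here; the only place that needed care was noticing that one cannot simply absorb the negative part of $V$ into a seminorm estimate, and must instead exploit the $\M$-constraint on the bounded region $B_R$.
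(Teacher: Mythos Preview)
Your proof is correct and follows essentially the same route as the paper: split off the region where $V \ge V^\infty/2$, and on the remaining set of finite measure bound $\int u^2$ via H\"older together with the constraint $\pnorm{u}=1$. Your version is in fact slightly more streamlined, since the single coercivity inequality $J(u)\ge\tfrac12\|u\|^2-C$ delivers all three conclusions at once, whereas the paper obtains $\lambda_1>-\infty$ through a separate weak lower semicontinuity argument.
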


\begin{proof}
Let $u_k \in \M$ be any sequence such that $J(u_k) \le \alpha < \infty$. Then
\[
\int_{\R^N} \Big(|\nabla u_k|^2 + \half\, V_\infty\, u_k^2\Big) \le \alpha + \int_{\R^N} \Big(W - \half\, V_\infty\Big)^+\, u_k^2.
\]
Note that the set $D = \supp \left(W - 1/2\, V_\infty\right)^+$ has finite measure and $W \in L^\infty(\R^N)$, so the second term on the right hand side, by the H\"older inequality on $D$, is bounded by $C\, |u_k|_p^2$ and hence bounded. Finally, it remains to note that $J$ is a sum of $\norm{u}^2$ and a weakly continuous functional, and thus it is weakly lower semicontinuous. Since its sublevel sets are bounded, it is necessarily bounded from below.
\end{proof}

In the absence of a compact Sobolev imbedding, the main technical tool we use here for handling the convergence matters is the concentration compactness principle of Lions \cite{MR778970,MR879032}. This is expressed as the profile decomposition of Benci and Cerami \cite{MR898712} for critical sequences of $\restr{J}{\M}$, which is a particular case of the profile decomposition of Solimini \cite{MR1340267} for general sequences in Sobolev spaces.

\begin{proposition} \label{Proposition 3.2}
Let $u_k \in H^1(\R^N)$ be a bounded sequence, and assume that there is a constant $\delta > 0$ such that if $u_k(\cdot + y_k) \rightharpoonup w \ne 0$ on a renumbered subsequence for some $y_k \in \R^N$ with $|y_k| \to \infty$, then $\norm{w} \ge \delta$. Then there are $m \in \N$, $w^{(n)} \in H^1(\R^N)$, $y^{(n)}_k \in \R^N,\, y^{(1)}_k = 0$ with $k \in \N$, $n \in \set{1,\dots,m}$, $w^{(n)} \ne 0$ for $n \ge 2$, such that, on a renumbered subsequence,
\begin{gather}
u_k(\cdot + y^{(n)}_k) \rightharpoonup w^{(n)}, \label{3.2}\\[10pt]
\big|y^{(n)}_k - y^{(l)}_k\big| \to \infty \text{ for } n \ne l, \label{3.3}\\[7.5pt]
\sum_{n=1}^m\, \norm{w^{(n)}}^2 \le \liminf \norm{u_k}^2, \label{3.4}\\[5pt]
u_k - \sum_{n=1}^m\, w^{(n)}(\cdot - y^{(n)}_k) \to 0 \text{ in } L^p(\R^N) \quad \forall p \in (2,2^\ast). \label{3.5}
\end{gather}
\end{proposition}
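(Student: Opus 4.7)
The strategy is iterative, based on successive applications of weak compactness combined with Lions' vanishing lemma. First I set $y^{(1)}_k = 0$ and, using the boundedness of $\{u_k\}$ in $H^1(\R^N)$, extract a subsequence along which $u_k \rightharpoonup w^{(1)}$. Define the first residual $v^{(1)}_k := u_k - w^{(1)}$. Inductively, assuming $v^{(n)}_k := u_k - \sum_{j=1}^n w^{(j)}(\cdot - y^{(j)}_k)$ has been constructed with $v^{(n)}_k \rightharpoonup 0$ in $H^1(\R^N)$, if $v^{(n)}_k \to 0$ in $L^p(\R^N)$ I halt with $m = n$; otherwise I extract one further profile and continue.

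The extraction step uses Lions' lemma: a bounded sequence $\{v_k\} \subset H^1(\R^N)$ with $\sup_{y \in \R^N} \int_{B_1(y)} |v_k|^2 \to 0$ converges to zero in $L^p(\R^N)$ for every $p \in (2,2^\ast)$. Failure of the $L^p$-convergence of $v^{(n)}_k$ therefore yields $y^{(n+1)}_k \in \R^N$ and a subsequence along which $v^{(n)}_k(\cdot + y^{(n+1)}_k) \rightharpoonup w^{(n+1)} \ne 0$. Since $v^{(n)}_k \rightharpoonup 0$ at the origin, $|y^{(n+1)}_k| \to \infty$. Moreover, using that $|y^{(n)}_k - y^{(j)}_k| \to \infty$ for $j < n$ (already established inductively), the translates of the previously extracted $w^{(j)}(\cdot - y^{(j)}_k)$ weakly vanish when centered at $y^{(n+1)}_k$, so in fact $u_k(\cdot + y^{(n+1)}_k) \rightharpoonup w^{(n+1)}$; the hypothesis then forces $\norm{w^{(n+1)}} \ge \delta$.

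Two remaining points need to be verified. For \eqref{3.3}, proceed by induction on $n$: if $|y^{(n+1)}_k - y^{(l)}_k|$ were bounded for some $l \le n$, pass to a further subsequence so that $y^{(n+1)}_k - y^{(l)}_k \to z \in \R^N$; using the already-established orthogonality between $y^{(l)}_k$ and the later $y^{(j)}_k$ with $l < j \le n$, one computes that $v^{(n)}_k(\cdot + y^{(n+1)}_k) \rightharpoonup 0$, contradicting $w^{(n+1)} \ne 0$. For \eqref{3.4}, expand $\norm{u_k}^2$ in the Hilbert space induced by $\ip{\cdot}{\cdot}$ and use that the cross-terms $\ip{w^{(j)}(\cdot - y^{(j)}_k)}{w^{(l)}(\cdot - y^{(l)}_k)}$ vanish as $k \to \infty$ (by \eqref{3.3} and weak convergence) to obtain
\[
\norm{u_k}^2 = \sum_{j=1}^n \norm{w^{(j)}}^2 + \norm{v^{(n)}_k}^2 + \o(1),
\]
and take $\liminf$.

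The main obstacle is termination of the iteration. Here the quantitative non-concentration hypothesis is essential: every profile beyond the first satisfies $\norm{w^{(n)}} \ge \delta$, while the partial sums $\sum \norm{w^{(n)}}^2$ are bounded above by $\liminf \norm{u_k}^2 < \infty$. Hence only finitely many nonzero profiles can appear; let $m$ be the terminal index. By the halting criterion, the residual then satisfies $v^{(m)}_k \to 0$ in $L^p(\R^N)$, which is \eqref{3.5}.
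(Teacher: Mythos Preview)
The paper does not prove Proposition~\ref{Proposition 3.2}; it is quoted as a known result, attributed to the profile decompositions of Benci--Cerami and Solimini (built on Lions' concentration-compactness). Your sketch is precisely the standard iterative extraction argument underlying those references and is correct in substance.

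One small ordering issue worth cleaning up: in the inductive step you invoke ``$|y^{(n)}_k - y^{(j)}_k| \to \infty$ for $j < n$ (already established inductively)'' to conclude that the translates $w^{(j)}(\cdot - y^{(j)}_k)$ weakly vanish when recentered at $y^{(n+1)}_k$, and hence that $u_k(\cdot + y^{(n+1)}_k) \rightharpoonup w^{(n+1)}$. But the inductive hypothesis concerns only the first $n$ centers and says nothing yet about $y^{(n+1)}_k$; what you actually need here is $|y^{(n+1)}_k - y^{(j)}_k| \to \infty$ for all $j \le n$, which is exactly the content of your later verification of \eqref{3.3}. So the argument is complete once that verification is moved \emph{before} the claim $u_k(\cdot + y^{(n+1)}_k) \rightharpoonup w^{(n+1)}$ (and before the appeal to the $\delta$-hypothesis). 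As written the presentation is slightly circular, though all the ingredients are present.
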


Equation \eqref{3.1} implies
\begin{equation} \label{3.6}
- \Delta u_k + V(x)\, u_k = c_k\, |u_k|^{p-2}\, u_k + \o(1),
\end{equation}
where $c_k = (p/2)\, \mu_k \to c$. So if $u_k(\cdot + y_k) \rightharpoonup w$ on a renumbered subsequence for some $y_k \in \R^N$ with $|y_k| \to \infty$, then $w$ solves \eqref{1.4} with $\lambda = c$ by \eqref{1.2}, in particular, $\norm{w}^2 = c \pnorm{w}^p$. If $w \ne 0$, it follows that $c > 0$ and $\norm{w} \ge \big((\lambda_1^\infty)^p/c\big)^{1/2\, (p-1)}$ since $\norm{w}^2/\pnorm{w}^2 \ge \lambda_1^\infty$.

\begin{proposition} \label{Proposition 3.3}
Let $u_k \in \M$ be a critical sequence for $\restr{J}{\M}$ at the level $c \in \R$. Then it admits a renumbered subsequence that satisfies the conclusions of Proposition \ref{Proposition 3.2} for some $m \in \N$, and, in addition,
\begin{gather}
- \Delta w^{(1)} + V(x)\, w^{(1)} = c\, |w^{(1)}|^{p-2}\, w^{(1)}, \notag\\[5pt]
\qquad - \Delta w^{(n)} + V^\infty\, w^{(n)} = c\, |w^{(n)}|^{p-2}\, w^{(n)}, \quad n = 2,\dots,m, \label{3.7}\\[10pt]
J(w^{(1)}) = c\, I(w^{(1)}), \qquad J^\infty(w^{(n)}) = c\, I(w^{(n)}), \quad n = 2,\dots,m, \label{3.8}\\[7.5pt]
\sum_{n=1}^m\, I(w^{(n)}) = 1, \qquad J(w^{(1)}) + \sum_{n=2}^m\, J^\infty(w^{(n)}) = c, \label{3.9}\\[5pt]
u_k - \sum_{n=1}^m\, w^{(n)}(\cdot - y^{(n)}_k) \to 0 \text{ in } H^1(\R^N). \label{3.10}
\end{gather}
\end{proposition}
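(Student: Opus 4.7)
The plan is to apply Proposition \ref{Proposition 3.2} to the sequence $u_k$, which is bounded by Lemma \ref{Lemma 3.1}, and then use equation \eqref{3.6} to pin down the profiles. First I would verify the hypothesis of Proposition \ref{Proposition 3.2}. Suppose $u_k(\cdot + y_k) \rightharpoonup w \ne 0$ along a subsequence with $|y_k| \to \infty$. Translating \eqref{3.6} by $y_k$ and passing to the weak limit, using \eqref{1.2} and the boundedness of $V$, shows that $w$ solves \eqref{1.4} with $\lambda = c$. Testing with $w$ gives $\norm{w}^2 = c\, \pnorm{w}^p$, which is impossible for $c \le 0$ (so the hypothesis is vacuous in that case), while for $c > 0$ the Rayleigh bound $\norm{w}^2/\pnorm{w}^2 \ge \lambda_1^\infty$ forces a uniform lower bound $\norm{w} \ge \delta > 0$. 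Proposition \ref{Proposition 3.2} then yields $w^{(n)}$ and $y_k^{(n)}$ satisfying \eqref{3.2}--\eqref{3.5}.

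Next I would extract the profile equations. For $n = 1$ passing to the weak limit in \eqref{3.6} directly gives the first equation in \eqref{3.7}; for $n \ge 2$ the same passage in the translate of \eqref{3.6} by $y_k^{(n)}$, combined with $V(\cdot + y_k^{(n)}) \to V^\infty$ a.e., produces the autonomous equation. Testing each equation with $w^{(n)}$ itself yields \eqref{3.8}.

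For \eqref{3.9} and \eqref{3.10}, set $r_k := u_k - \sum_{n=1}^m w^{(n)}(\cdot - y_k^{(n)})$. Using \eqref{3.3} and the fact that $r_k(\cdot + y_k^{(n)}) \rightharpoonup 0$ for each $n$ (a direct consequence of the decomposition), a standard Brezis--Lieb type cross-term analysis gives both the Hilbert identity $\norm{u_k}^2 = \norm{r_k}^2 + \sum_n \norm{w^{(n)}}^2 + o(1)$ and the $L^p$ identity $I(u_k) = I(r_k) + \sum_n I(w^{(n)}) + o(1)$; combined with \eqref{3.5} and $I(u_k) = 1$ this produces the first part of \eqref{3.9}. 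For the energy identity, local $L^2$ compactness (Rellich) together with $W(x) \to 0$ as $|x| \to \infty$ gives $\int W u_k^2 \to \int W (w^{(1)})^2$, whence
\[
J(u_k) = \norm{r_k}^2 + J(w^{(1)}) + \sum_{n=2}^m J^\infty(w^{(n)}) + o(1).
\]
Since $J(u_k) \to c$ and the profile contributions sum to $c \sum_n I(w^{(n)}) = c$ by \eqref{3.8} and the $I$-identity just obtained, both the energy identity in \eqref{3.9} and $\norm{r_k} \to 0$ drop out, the latter being \eqref{3.10}.

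The main obstacle I anticipate is the $J$-decomposition: one must track the asymptotic orthogonality of the translated profiles under \eqref{3.3} and handle the mismatch between $V$ and $V^\infty$, which is harmless for $n \ge 2$ (those translates escape to infinity, where $W$ vanishes) but contributes a genuine $\int W (w^{(1)})^2$ correction at the concentrated profile. Once these cross-terms are controlled using local compactness and the decay of $W$ at infinity, the strong $H^1$ convergence of $r_k$ comes out essentially for free, and no separate quantitative bound on $\norm{r_k}$ beyond that extracted from the energy identity is needed.
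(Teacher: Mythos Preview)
Your proposal is correct and, for \eqref{3.7}, \eqref{3.8}, and \eqref{3.9}, follows the same outline as the paper's (deliberately terse) sketch: pass to weak limits in \eqref{3.6} and its translates, test the resulting equations with the profiles, and use a Brezis--Lieb type splitting together with \eqref{3.5} for the $L^p$ identity (which the paper attributes to Lemma 3.4 of Tintarev--Fieseler).

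The one genuine difference is in how you reach \eqref{3.10}. The paper derives strong $H^1$ convergence of the remainder directly from \eqref{3.5} and \eqref{3.6}: subtracting the translated profile equations from \eqref{3.6} and using that the nonlinear terms converge in $L^{p'}$ (by \eqref{3.5}) while the $W$-correction is compact, one gets $-\Delta r_k + V^\infty r_k \to 0$ in $H^{-1}$, hence $r_k \to 0$ in $H^1$. You instead establish the Hilbert-norm splitting $\norm{u_k}^2 = \norm{r_k}^2 + \sum_n \norm{w^{(n)}}^2 + o(1)$, identify the $W$-term via local compactness, and read off $\norm{r_k}\to 0$ from the energy balance $J(u_k)\to c$ combined with \eqref{3.8} and the $I$-identity. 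Both routes are standard and equally short; the paper's avoids computing any cross-terms in the $H^1$ norm, while yours has the advantage of giving \eqref{3.9} and \eqref{3.10} in one stroke.
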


\begin{proof}
The proof is based on standard arguments and we only sketch it. Equations in \eqref{3.7} follow from \eqref{3.6}, \eqref{3.2}, and \eqref{1.2}, and \eqref{3.8} is immediate from \eqref{3.7}. First equation in \eqref{3.9} is a particular case of Lemma 3.4 in Tintarev and Fieseler \cite{MR2294665}, and the second follows from \eqref{3.8} and the first. Relation \eqref{3.10} follows from \eqref{3.5}, \eqref{3.6}, and the continuity of the Sobolev imbedding.
\end{proof}

We can now show that $\restr{J}{\M}$ satisfies the Palais-Smale condition in a range of levels strictly below the upper bound given by Proposition \ref{Proposition 1.1},
\[
\lambda^\# = \begin{cases}
\big(\lambda_1^\wop + (\lambda_1^\infty)^\wop\big)^\pow, & \lambda_1 > 0\\[10pt]
\lambda_1^\infty, & \lambda_1 \le 0.
\end{cases}
\]
Note that
\begin{equation} \label{1.7}
\lambda_1^\infty \le \lambda^\# \le \lambda_2^\infty.
\end{equation}
Let $u_k$ be the renumbered subsequence of a critical sequence for $\restr{J}{\M}$ at the level $c$ given by Proposition \ref{Proposition 3.3}, and set $t_n = I(w^{(n)})$. Then
\begin{equation} \label{3.11}
\sum_{n=1}^m\, t_n = 1
\end{equation}
by \eqref{3.9}, so each $t_n \in [0,1]$, and $t_n \ne 0$ for $n \ge 2$. Since $J(w^{(1)}) \ge \lambda_1\, t_1^{2/p}$ and $J^\infty(w^{(n)}) \ge \lambda_1^\infty\, t_n^{2/p}$, \eqref{3.8} gives
\begin{equation} \label{3.12}
t_1 = 0 \ins{or} c\, t_1^\pow \ge \lambda_1, \qquad c\, t_n^\pow \ge \lambda_1^\infty, \quad n = 2,\dots,m.
\end{equation}
It follows from \eqref{3.11} and \eqref{3.12} that if $m \ge 2$, then
\begin{equation} \label{3.13}
c \ge \begin{cases}
\big(\lambda_1^\wop + (m - 1)\, (\lambda_1^\infty)^\wop\big)^\pow, & t_1 \ne 0 \text{ and } \lambda_1 > 0\\[10pt]
(m - 1)^\pow\, \lambda_1^\infty, & t_1 = 0 \text{ or } \lambda_1 \le 0.
\end{cases}
\end{equation}

\begin{lemma} \label{Lemma 3.4}
$u_k$ converges in $H^1(\R^N)$ in the following cases:
\begin{enumroman}
\item \label{3.3.i} $\lambda_1 > 0$ and $\lambda_1^\infty < c < \lambda^\#$,
\item \label{3.3.ii} $\lambda_1 \le 0$ and $c < \lambda_1^\infty = \lambda^\#$.
\end{enumroman}
\end{lemma}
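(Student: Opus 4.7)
The plan is to apply Proposition \ref{Proposition 3.3} to the critical sequence $u_k$ (bounded by Lemma \ref{Lemma 3.1}), obtaining on a renumbered subsequence a profile decomposition indexed by some $m \in \N$. Since $y^{(1)}_k = 0$, the convergence statement \eqref{3.10} reduces to $u_k \to w^{(1)}$ in $H^1(\R^N)$ as soon as we establish $m = 1$. The whole task is therefore to exclude $m \ge 2$, using the lower bound \eqref{3.13} and, in the one borderline case where \eqref{3.13} is not sharp enough, the nonexistence statement of Proposition \ref{Proposition 1.2}\ref{1.2.ii}.

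For part \ref{3.3.i}, assume $\lambda_1 > 0$ and $\lambda_1^\infty < c < \lambda^\#$, and suppose for contradiction that $m \ge 2$. If $t_1 \ne 0$, the first line of \eqref{3.13} gives $c \ge \big(\lambda_1^\wop + (m - 1)\, (\lambda_1^\infty)^\wop\big)^\pow \ge \lambda^\#$, a contradiction. If $t_1 = 0$ and $m \ge 3$, the second line of \eqref{3.13} combined with \eqref{1.6} and \eqref{1.7} yields $c \ge (m-1)^\pow\, \lambda_1^\infty \ge 2^\pow\, \lambda_1^\infty = \lambda_2^\infty \ge \lambda^\#$, again a contradiction. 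The only remaining possibility is $m = 2$ with $t_1 = 0$: then $w^{(1)} = 0$ and, by \eqref{3.11}, $I(w^{(2)}) = 1$, so by \eqref{3.7} the function $w^{(2)} \in \M$ is a solution of \eqref{1.4} with $\lambda = c$. Since $\lambda_1^\infty < c < \lambda^\# \le \lambda_2^\infty$, this contradicts Proposition \ref{Proposition 1.2}\ref{1.2.ii}.

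For part \ref{3.3.ii}, assume $\lambda_1 \le 0$ and $c < \lambda_1^\infty$, and suppose again $m \ge 2$. Because $\lambda_1 \le 0$, the second line of \eqref{3.13} applies unconditionally and produces $c \ge (m-1)^\pow\, \lambda_1^\infty \ge \lambda_1^\infty$, contradicting $c < \lambda_1^\infty$. Hence $m = 1$ in both situations, and the desired strong convergence follows from \eqref{3.10}.

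The main obstacle is the borderline subcase $m = 2$, $t_1 = 0$ in part \ref{3.3.i}: there the quantitative bound \eqref{3.13} only gives $c \ge \lambda_1^\infty$, which is perfectly compatible with $c < \lambda^\#$. Closing this subcase is not a matter of sharper energy bookkeeping; one has to recognize the escaping profile $w^{(2)}$ as a bona fide solution of the autonomous problem on $\M$ at an eigenvalue trapped in the forbidden interval $(\lambda_1^\infty, \lambda_2^\infty]$ and then appeal to Proposition \ref{Proposition 1.2}\ref{1.2.ii}.
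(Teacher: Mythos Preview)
Your proof is correct and follows essentially the same approach as the paper's: both reduce to showing $m=1$ via \eqref{3.10}, use \eqref{3.13} to exclude $m\ge 2$ in all but the borderline subcase $m=2$, $t_1=0$ of part \ref{3.3.i}, and dispatch that subcase by recognizing $w^{(2)}\in\M$ as a solution of \eqref{1.4} at a level in $(\lambda_1^\infty,\lambda_2^\infty)$, contrary to Proposition \ref{Proposition 1.2}\ref{1.2.ii}. The only difference is organizational: the paper first derives $(m-1)^\pow\lambda_1^\infty\le c$ uniformly to force $m=2$ and eliminate case \ref{3.3.ii} simultaneously, whereas you handle the two cases separately from the outset.
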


\begin{proof}
If $m = 1$, then $u_k \to w^{(1)}$ in $H^1(\R^N)$ by \eqref{3.10}, so suppose $m \ge 2$. Then
\[
(m - 1)^\pow\, \lambda_1^\infty \le c < \lambda^\# \le \lambda_2^\infty = 2^\pow\, \lambda_1^\infty
\]
by \eqref{3.13}, \eqref{1.7}, and \eqref{1.6}, so $m = 2$ and $c \ge \lambda_1^\infty$, which eliminates case \ref{3.3.ii}. As for case \ref{3.3.i}, if $t_1 \ne 0$, then $c \ge \lambda^\#$ by \eqref{3.13}, so $t_1 = 0$. Then $t_2 = 1$ by \eqref{3.11}, so $w^{(2)}$ is a solution of \eqref{1.4} on $\M$ with $\lambda = c$ by \eqref{3.7}, which contradicts Proposition \ref{Proposition 1.2} since $\lambda_1^\infty < c < \lambda^\# \le \lambda_2^\infty$.
\end{proof}

\begin{remark}
Lemma \ref{Lemma 3.4} is due to Cerami \cite{MR2278729}. We have included a proof here merely for the convenience of the reader.
\end{remark}

We now have the following existence results for \eqref{1.1}.

\begin{proposition} \label{Proposition 1.3}
Assume that $V \in L^\infty(\R^N)$ satisfies \eqref{1.2} and $p \in (2,2^\ast)$. Then the equation \eqref{1.1} has a solution on $\M$ for $\lambda = \lambda_2$ in the following cases:
\begin{enumroman}
\item $\lambda_1 > 0$ and $\lambda_1^\infty < \lambda_2 < \lambda^\#$,
\item $\lambda_1 \le 0$ and $\lambda_2 < \lambda_1^\infty = \lambda^\#$.
\end{enumroman}
\end{proposition}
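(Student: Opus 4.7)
The approach is standard minimax: I will produce a Palais-Smale sequence at level $\lambda_2$ by an equivariant deformation argument on the manifold $\M$, and then invoke Lemma \ref{Lemma 3.4} to extract a convergent subsequence, whose limit is the desired critical point.

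First, the hypotheses place $\lambda_2$ precisely in the range where Lemma \ref{Lemma 3.4} applies: in case (i), $\lambda_1^\infty < \lambda_2 < \lambda^\#$ is the setting of part \ref{3.3.i}, and in case (ii), $\lambda_2 < \lambda_1^\infty = \lambda^\#$ is the setting of part \ref{3.3.ii}. Hence $\restr{J}{\M}$ satisfies the Palais-Smale condition at the level $\lambda_2$, and it suffices to produce a critical sequence at this level.

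To do so, I argue by contradiction. Suppose $\lambda_2$ is not a critical value. Since $I$ and $J$ are even, $\M$ is a $C^1$ Hilbert submanifold of $H^1(\R^N)$ invariant under $u \mapsto -u$, and $\restr{J}{\M}$ is an even $C^1$ functional on it. Averaging a standard tangential pseudo-gradient for $\restr{J}{\M}$ with its image under $u \mapsto -u$ produces an odd, locally Lipschitz pseudo-gradient field; integrating this field and using Palais-Smale at $\lambda_2$ yields the usual equivariant deformation lemma, which furnishes $\epsilon > 0$ and an odd continuous $\eta : \M \to \M$ such that
\[
\eta\bigl(\set{u \in \M : J(u) \le \lambda_2 + \epsilon}\bigr) \subset \set{u \in \M : J(u) \le \lambda_2 - \epsilon}.
\]
By the minimax definition of $\lambda_2$, there exists $\gamma \in \Gamma_2$ with $\max_{u \in \gamma(S^1)} J(u) < \lambda_2 + \epsilon$. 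Then $\eta \circ \gamma$ is again odd and continuous, so $\eta \circ \gamma \in \Gamma_2$, and $\max J \le \lambda_2 - \epsilon$, contradicting $\lambda_2 = \inf_{\Gamma_2} \max J$. Consequently $\lambda_2$ is critical, and any critical point $u_0 \in \M$ at level $\lambda_2$ satisfies $J'(u_0) = (2\lambda_2/p)\, I'(u_0)$, which in weak form is precisely \eqref{1.1} with $\lambda = \lambda_2$.

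The only nonroutine step is the equivariant deformation on the constrained manifold $\M$ rather than on an ambient Banach space. This is standard (it relies only on $\M$ being a $C^1$ symmetric submanifold and $J$ being even), and the boundedness of the Palais-Smale sequences produced in the process is automatic from Lemma \ref{Lemma 3.1}, closing the loop.
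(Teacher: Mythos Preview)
Your proof is correct and follows the same route as the paper: Lemma \ref{Lemma 3.4} yields the Palais-Smale condition at $\lambda_2$, after which a standard equivariant deformation argument forces $\lambda_2$ to be critical. The paper compresses the second step into the phrase ``by a standard argument,'' whereas you have spelled it out.
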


\begin{proof}
Since $\restr{J}{\M}$ satisfies the Palais-Smale condition at the level $\lambda_2$ by Lemma \ref{Lemma 3.4}, it is a critical level by a standard argument.
\end{proof}

\begin{lemma} \label{Lemma 3.5}
If $\lambda_1 \le 0 < \lambda$ or $\lambda_1 < 0 \le \lambda$, then every solution $u$ of \eqref{1.1} on $\M$ is nodal.
\end{lemma}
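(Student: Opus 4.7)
The plan is to argue by contradiction: suppose some solution $u \in \M$ of \eqref{1.1} is non-nodal. Replacing $u$ by $-u$ if necessary, I take $u \ge 0$; elliptic regularity together with the strong maximum principle applied to $-\Delta u + (V - \lambda u^{p-2})\, u = 0$ then yields $u > 0$ throughout $\R^N$.

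The main tool I would invoke is a Picone-type inequality: for every $v \in H^1(\R^N)$,
\[
J(v) \ge \lambda \int_{\R^N} u^{p-2}\, v^2.
\]
This follows from the pointwise identity $|\nabla v|^2 - \nabla(v^2/u) \cdot \nabla u = \bigl|\nabla v - (v/u)\, \nabla u\bigr|^2 \ge 0$, valid for $v \ge 0$. Testing the equation $-\Delta u + V u = \lambda\, u^{p-1}$ against $v^2/u$ (rigorously, against $v^2/(u + \varepsilon)$ with a compactly supported cutoff of $v$, and then passing to the limit) yields the inequality for $v \ge 0$; the case of general $v \in H^1(\R^N)$ follows since $J(v) = J(|v|)$ and $\int u^{p-2} v^2 = \int u^{p-2} |v|^2$.

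With Picone in hand, I would split into two cases. If $\lambda_1 < 0$, pick $v \in \M$ with $J(v) < 0$; since $\lambda \ge 0$ and $u > 0$ force $\lambda \int u^{p-2}\, v^2 \ge 0$, this immediately contradicts the Picone inequality. If $\lambda_1 = 0$, the hypothesis of the lemma forces $\lambda > 0$, and since $\lambda_1^\infty > 0$ we have $\lambda_1 < \lambda_1^\infty$; by the classical concentration-compactness result of Lions \cite{MR778970} already recalled in the introduction, $\lambda_1$ is therefore attained at some $u_0 \in \M$, which we may take to be nonnegative and hence nontrivial. Applying Picone to $v = u_0$ gives $0 = J(u_0) \ge \lambda \int u^{p-2}\, u_0^2 > 0$, a contradiction. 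The only delicate step is the rigorous justification of the Picone inequality itself, since $v^2/u$ need not lie in $H^1(\R^N)$ given the exponential decay of $u$, but this is handled in the standard way through the regularization $u + \varepsilon$ and a cutoff of $v$; everything else reduces to elementary bookkeeping around the signs of $\lambda$ and $\lambda_1$.
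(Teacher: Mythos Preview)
Your argument is correct, but it takes a different and somewhat heavier route than the paper's. The paper observes that, since $\lambda_1 \le 0 < \lambda_1^\infty$, the infimum $\lambda_1$ is attained at some $w_1 > 0$ by Lions, and then simply tests the equation for $w_1$ with $u$ and the equation for $u$ with $w_1$ to obtain
\[
\lambda_1 \int_{\R^N} w_1^{p-1}\, u \;=\; \int_{\R^N} \nabla w_1 \cdot \nabla u + V\, w_1\, u \;=\; \lambda \int_{\R^N} |u|^{p-2}\, u\, w_1.
\]
If $u$ is nonnodal, the strong maximum principle makes it sign definite, so both outer integrals are nonzero and of the same sign, forcing $\lambda_1$ and $\lambda$ to have the same sign---contradicting the hypothesis. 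No regularization is needed, since both $w_1$ and $u$ are legitimate $H^1$ test functions for each other's equation.

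Your Picone-based proof reaches the same contradiction via the inequality $J(v) \ge \lambda \int u^{p-2} v^2$, and you correctly flag and dispatch the regularization issue. One small advantage of your approach is that in the case $\lambda_1 < 0$ you never need the minimizer to exist---any $v$ with $J(v) < 0$ does the job---whereas the paper invokes the ground state $w_1$ throughout. On the other hand, the paper's cross-testing argument is a one-liner with no approximation step, so it is the more economical of the two.
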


\begin{proof}
Since $\lambda_1 \le 0 < \lambda_1^\infty$, \eqref{1.1} has a solution $w_1 > 0$ on $\M$ for $\lambda = \lambda_1$ (see Lions \cite{MR778970}). Then
\begin{equation} \label{3.14}
\lambda_1 \int_{\R^N} w_1^{p-1}\, u = \int_{\R^N} \nabla w_1 \cdot \nabla u + V(x)\, w_1\, u = \lambda \int_{\R^N} |u|^{p-2}\, u\, w_1.
\end{equation}
If $u$ is nonnodal, then it is sign definite by the strong maximum principle, so \eqref{3.14} implies that $\lambda_1$ and $\lambda$ have the same sign.
\end{proof}

\section{Proof of Theorem \ref{Theorem 1.1}}

Theorem \ref{Theorem 1.1} follows from Proposition \ref{Proposition 1.3}, Proposition \ref{Proposition 1.4} below, and Lemma \ref{Lemma 3.5}.

\begin{proposition} \label{Proposition 1.4}
Assume that $V \in L^\infty(\R^N)$ satisfies \eqref{1.2} and $p \in (2,2^\ast)$.
\begin{enumroman}
\item \label{1.4.ii} If \eqref{1.11T} holds, then $\lambda_2 < \lambda^\#$.
\item \label{1.4.i} If $\lambda_1 > 0$ and \eqref{1.10} holds, then $\lambda_2 > \lambda_1^\infty$.
\end{enumroman}
\end{proposition}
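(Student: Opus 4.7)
The plan is to handle the two parts separately, starting with the easier part (ii). For any $\gamma\in\Gamma_2$, Lemma \ref{Lemma 2.1} furnishes $u_0\in\gamma$ with $\pnorm{u_0^+}=\pnorm{u_0^-}=2^{-1/p}$. Writing $J(u_0)=J^\infty(u_0)-\int_{\R^N} Wu_0^2$ and using the disjoint supports of $u_0^\pm$ together with the elementary bound $J^\infty(v)\ge \lambda_1^\infty\pnorm{v}^2$ for every $v\in H^1(\R^N)$, one has
\[
J^\infty(u_0)=J^\infty(u_0^+)+J^\infty(u_0^-)\ge \lambda_1^\infty\left(\pnorm{u_0^+}^2+\pnorm{u_0^-}^2\right)=2^\pow\lambda_1^\infty,
\]
while H\"older with conjugate exponents $\wop$ and $p/2$ gives $\int_{\R^N}W u_0^2\le \pnorm[\wop]{W}\pnorm{u_0}^2=\pnorm[\wop]{W}$. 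Condition \eqref{1.10} then yields $J(u_0)\ge 2^\pow\lambda_1^\infty-\pnorm[\wop]{W}>\lambda_1^\infty$, and taking the infimum over $\gamma$ shows $\lambda_2>\lambda_1^\infty$.

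For part (i), \eqref{1.11T} forces $W>0$ pointwise, so $J(w_1^\infty)<\lambda_1^\infty$ and $\lambda_1<\lambda_1^\infty$; by Lions \cite{MR778970} the infimum $\lambda_1$ is attained at some $w_1\ge 0$ in $\M$ that obeys \eqref{1.9}. I would test the minimax with the path $\gamma_y(e^{i\theta})=v_{\theta,y}/\pnorm{v_{\theta,y}}$, where $v_{\theta,y}=w_1\cos\theta+w_1^\infty(\cdot-y)\sin\theta$ and $|y|$ is large. The engine of the argument is the penalty lower bound: the substitution $x=y+z$ combined with $|y+z|\le|y|+|z|$ yields
\[
\int_{\R^N}W(x)(w_1^\infty(x-y))^2\,dx\ge c\,e^{-a|y|}\int_{\R^N}e^{-a|z|}(w_1^\infty(z))^2\,dz=K\,e^{-a|y|},
\]
with $K>0$ finite since $a<a_0\le\sqrt{V^\infty}$ and \eqref{1.8} make $e^{-a|z|}(w_1^\infty)^2$ integrable; hence $J(w_1^\infty(\cdot-y))\le \lambda_1^\infty-Ke^{-a|y|}$. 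The bilinear cross term in $J(v_{\theta,y})$, evaluated by testing the equation for $w_1$ against $w_1^\infty(\cdot-y)$, reduces to $\lambda_1\int_{\R^N}w_1^{p-1}w_1^\infty(\cdot-y)$, which by \eqref{1.9} and the decay of $w_1^\infty$ is $\O(e^{-a_0|y|})$; similarly $\pnorm{v_{\theta,y}}^p$ differs from the disjoint-support value $|\cos\theta|^p+|\sin\theta|^p$ by $\O(e^{-a_0|y|})$. Running the Lemma \ref{Lemma 2.2} maximization through these perturbations gives
\[
\max_\theta J(\gamma_y(e^{i\theta}))\le \lambda^\#-C_1Ke^{-a|y|}+\O(e^{-a_0|y|})
\]
when $\lambda_1>0$ (and the analogous bound $\lambda_1^\infty-Ke^{-a|y|}+\O(e^{-a_0|y|})$ when $\lambda_1\le 0$), where $C_1>0$. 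Since $a<a_0$, the $\O(e^{-a_0|y|})$ error is $\o(e^{-a|y|})$, so the penalty gain strictly dominates for $|y|$ large and $\lambda_2<\lambda^\#$.

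The main technical obstacle is the uniform control of $\pnorm{v_{\theta,y}}^p$ across $\theta\in[0,2\pi]$. On the quadrant where $\cos\theta$ and $\sin\theta$ share a sign, the elementary inequality $(a+b)^p\ge a^p+b^p$ for $a,b\ge 0$ immediately gives $\pnorm{v_{\theta,y}}^p\ge |\cos\theta|^p+|\sin\theta|^p$. On the opposite quadrant the combination $-|\cos\theta|\,w_1+|\sin\theta|\,w_1^\infty(\cdot-y)$ admits cancellation, and one must exploit the near-disjoint supports of $w_1$ and $w_1^\infty(\cdot-y)$ to bound the overlap contribution by $\O(e^{-a_0|y|})$. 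Verifying that the perturbed ratio $J(v_{\theta,y})/\pnorm{v_{\theta,y}}^2$, maximized over $\theta$, still responds linearly to the penalty parameter $\eta_y\ge Ke^{-a|y|}$, so that the gap $\lambda^\#-\max J$ genuinely has order $e^{-a|y|}$ rather than being swamped by the $\O(e^{-a_0|y|})$ bookkeeping error, is the key quantitative step; the strict inequality $a<a_0$ is exactly what enables this separation.
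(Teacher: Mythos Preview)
Your proposal is correct and follows essentially the same route as the paper. For part \ref{1.4.i} you inline the content of Lemma \ref{Lemma 4.2} and Proposition \ref{Proposition 1.1} rather than invoking them, arriving at the identical bound $\lambda_2\ge 2^\pow\lambda_1^\infty-\pnorm[\wop]{W}>\lambda_1^\infty$; for part \ref{1.4.ii} you use exactly the test path $\gamma_{w_1 w_1^\infty(\cdot-y)}$, the same penalty estimate $\int W\,(w_1^\infty(\cdot-y))^2\ge Ke^{-a|y|}$, and the same $\O(e^{-a_0|y|})$ control of cross terms that the paper packages as Lemma \ref{Lemma 4.1}. The only cosmetic difference is that the paper handles the lower bound on $\pnorm{v_{\theta,y}}^p$ in one stroke via the elementary inequality $|a+b|^p\ge |a|^p+|b|^p-p|a|^{p-1}|b|-p|a||b|^{p-1}$, which avoids your quadrant split and directly gives $\pnorm{v_{\theta,y}}^2\ge(|\cos\theta|^p+|\sin\theta|^p)^{2/p}+\O(e^{-a_0|y|})$ uniformly in $\theta$; your linearized final bound $\lambda^\#-C_1Ke^{-a|y|}+\O(e^{-a_0|y|})$ is just the first-order expansion of the paper's $\big[\lambda_1^\wop+(\lambda_1^\infty-\tilde c\,e^{-a|y|})^\wop\big]^\pow+\O(e^{-a_0|y|})$.
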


Under assumption \eqref{1.11T},
\[
\lambda_1 \le J(w_1^\infty) \le J^\infty(w_1^\infty) - c \int_{\R^N} e^{- a\, |x|}\, w_1^\infty(x)^2\, dx < \lambda_1^\infty,
\]
so $\lambda_1$ is attained at some function $w_1 \ge 0$ (see Lions \cite{MR778970}). Our idea of the proof for part \ref{1.4.ii} of Proposition \ref{Proposition 1.4} is to show, analogously, that if $|y|$ is sufficiently large, then
\[
\lambda_2 \le \max_{u \in \gamma_{w_1 w_1^\infty(\cdot - y)}}\, J(u) < \lambda^\#.
\]

\begin{lemma} \label{Lemma 4.1}
Let $a_0$ be as in \eqref{1.9}. Then, as $|y| \to \infty$,
\begin{enumroman}
\item \label{4.1.i} $\dint_{\R^N} w_1(x)^{p-1}\, w_1^\infty(x - y)\, dx = \O(e^{- a_0\, |y|})$,
\item \label{4.1.ii} $\dint_{\R^N} w_1(x)\, w_1^\infty(x - y)^{p-1}\, dx = \O(e^{- a_0\, |y|})$,
\item \label{4.1.iii} $J(w_1\, \cos \theta + w_1^\infty(\cdot - y)\, \sin \theta)\\[5pt]
    = \lambda_1\, \cos^2 \theta + \left(\lambda_1^\infty - \dint_{\R^N} W(x)\, w_1^\infty(x - y)^2\, dx\right) \sin^2 \theta + \O(e^{- a_0\, |y|})$,
\item \label{4.1.iv} $\pnorm{w_1\, \cos \theta + w_1^\infty(\cdot - y)\, \sin \theta}^2 \ge \big(|\cos \theta|^p + |\sin \theta|^p\big)^{2/p} + \O(e^{- a_0\, |y|})$.
\end{enumroman}
\end{lemma}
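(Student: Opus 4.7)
The plan is to prove the four parts in order, with (i) and (ii) supplying the convolution-type bounds needed for (iii) and (iv). The common ingredients are the pointwise decay estimates $w_1(x) \le Ce^{-a_0|x|}$ from \eqref{1.9} and $w_1^\infty(x) \le Ce^{-\sqrt{V^\infty}|x|}$ (from \eqref{1.8}, after dropping the harmless polynomial factor), together with the triangle inequality $|x| + |x-y| \ge |y|$.

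For (i), the integrand is dominated by $Ce^{-a_0(p-1)|x|}e^{-\sqrt{V^\infty}|x-y|}$. Using $a_0 \le \sqrt{V^\infty}$ and $p - 1 \ge 1$, I would decompose
\[
a_0(p-1)|x| + \sqrt{V^\infty}|x-y| = a_0(|x| + |x-y|) + a_0(p-2)|x| + (\sqrt{V^\infty} - a_0)|x-y| \ge a_0|y| + a_0(p-2)|x|.
\]
Since $p > 2$, the integral $\int_{\R^N} e^{-a_0(p-2)|x|}\,dx$ is finite, yielding the bound $Ce^{-a_0|y|}$. Part (ii) is proved analogously via $a_0|x| + \sqrt{V^\infty}(p-1)|x-y| \ge a_0|y| + (\sqrt{V^\infty}(p-1) - a_0)|x-y|$, where $\sqrt{V^\infty}(p-1) > a_0$ renders the excess factor integrable.

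For (iii), expand $J(w_1\cos\theta + w_1^\infty(\cdot-y)\sin\theta)$ by bilinearity. The diagonal terms give $J(w_1)\cos^2\theta = \lambda_1\cos^2\theta$ and, after translating the potential integral by $y$,
\[
J(w_1^\infty(\cdot-y))\sin^2\theta = \left[\lambda_1^\infty - \int_{\R^N} W(x)\,w_1^\infty(x-y)^2\,dx\right]\sin^2\theta.
\]
For the cross term $2\cos\theta\sin\theta\int[\nabla w_1 \cdot \nabla w_1^\infty(\cdot-y) + V w_1\, w_1^\infty(\cdot-y)]\,dx$, I would integrate the gradient product by parts and invoke the Euler--Lagrange equation $-\Delta w_1 + V w_1 = \lambda_1 w_1^{p-1}$; the two $V w_1\, w_1^\infty(\cdot-y)$ contributions cancel, leaving $2\lambda_1 \cos\theta\sin\theta\int w_1^{p-1}(x)\, w_1^\infty(x-y)\,dx = \O(e^{-a_0|y|})$ by (i) (uniformly in $\theta$ since $|\cos\theta\sin\theta| \le 1$).

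For (iv), the key ingredient is the elementary inequality
\[
|s|^p + |t|^p - |s+t|^p \le C_p(|s|^{p-1}|t| + |s||t|^{p-1}), \qquad p \ge 2,\ s, t \in \R,
\]
which I would establish by $p$-homogeneity (normalize $|s| = 1$) and then analyzing $h(\tau) = 1 + |\tau|^p - |1+\tau|^p$ via the tangent inequality $(1+\tau)^p \ge 1 + p\tau$ for $\tau \ge -1$, with direct comparison for $\tau < -1$. Applied pointwise with $s = w_1\cos\theta$ and $t = w_1^\infty(\cdot-y)\sin\theta$ and integrated, this yields
\[
\int_{\R^N} |w_1\cos\theta + w_1^\infty(\cdot-y)\sin\theta|^p\,dx \ge |\cos\theta|^p + |\sin\theta|^p - Ce^{-a_0|y|}
\]
via (i) and (ii). Since $|\cos\theta|^p + |\sin\theta|^p \in [2^{1-p/2}, 1]$ is bounded away from zero and above, raising to the $2/p$-th power preserves the additive error up to a multiplicative constant, which gives the claim. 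The main technical hurdle throughout is extracting the \emph{full} rate $a_0$ in the exponent; the decomposition above exploits $p > 2$ to produce a strictly positive excess factor, whereas a naive bisection at $|x| = |y|/2$ would only deliver the weaker rate $a_0/2$.
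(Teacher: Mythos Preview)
Your proof is correct and follows essentially the same route as the paper: the same exponential splitting via the triangle inequality in (i)--(ii), the same bilinear expansion with the Euler--Lagrange equation for $w_1$ in (iii), and the same pointwise inequality $|a+b|^p \ge |a|^p + |b|^p - C_p(|a|^{p-1}|b| + |a||b|^{p-1})$ in (iv). The only cosmetic differences are that the paper immediately replaces $\sqrt{V^\infty}$ by the cruder bound $a_0$ when estimating $w_1^\infty$, and handles (ii) by the change of variable $x \mapsto x+y$ rather than a direct decomposition.
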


\begin{proof}
\ref{4.1.i} By \eqref{1.8}, $w_1^\infty(x) \le \widetilde{C}\, e^{- a_0\, |x|}$ for some $\widetilde{C} > 0$, which together with \eqref{1.9} shows that the integral on the left is bounded by a constant multiple of
\[
\int_{\R^N} e^{- a_0\, [(p - 1)\, |x| + |x - y|]} \le e^{- a_0\, |y|} \int_{\R^N} e^{- a_0\, (p - 2)\, |x|}
\]
by the triangle inequality.

\ref{4.1.ii} Same as part \ref{4.1.i} after the change of variable $x \mapsto x + y$.

\ref{4.1.iii} We have
\begin{align*}
& \hquad J(w_1\, \cos \theta + w_1^\infty(\cdot - y)\, \sin \theta)\\[7.5pt]
= & \hquad J(w_1)\, \cos^2 \theta + \left(J^\infty(w_1^\infty(\cdot - y)) - \int_{\R^N} W(x)\, w_1^\infty(x - y)^2\, dx\right) \sin^2 \theta\\[5pt]
& \hquad + \sin 2 \theta \int_{\R^N} \big(\nabla w_1(x) \cdot \nabla w_1^\infty(x - y) + V(x)\, w_1(x)\, w_1^\infty(x - y)\big)\, dx\\[5pt]
= & \hquad \lambda_1\, \cos^2 \theta + \left(\lambda_1^\infty - \int_{\R^N} W(x)\, w_1^\infty(x - y)^2\, dx\right) \sin^2 \theta\\[5pt]
& \hquad + \lambda_1\, \sin 2 \theta \int_{\R^N} w_1(x)^{p-1}\, w_1^\infty(x - y)\, dx
\end{align*}
since $w_1$ solves \eqref{1.1} with $\lambda = \lambda_1$, and the last term is of the order $\O(e^{- a_0\, |y|})$ by part \ref{4.1.i}.

\ref{4.1.iv} Using the elementary inequality
\[
|a + b|^p \ge |a|^p + |b|^p - p\, |a|^{p-1}\, |b| - p\, |a|\, |b|^{p-1} \quad \forall a, b \in \R,
\]
we have
\begin{align*}
& \hquad \pnorm{w_1\, \cos \theta + w_1^\infty(\cdot - y)\, \sin \theta}^2\\[7.5pt]
\ge & \hquad \bigg(\pnorm{w_1}^p\, |\cos \theta|^p + \pnorm{w_1^\infty(\cdot - y)}^p\, |\sin \theta|^p\\[5pt]
& \hquad - p \int_{\R^N} w_1(x)^{p-1}\, w_1^\infty(x - y)\, dx - p \int_{\R^N} w_1(x)\, w_1^\infty(x - y)^{p-1}\, dx\bigg)^{2/p}\\[7.5pt]
= & \hquad \big(|\cos \theta|^p + |\sin \theta|^p + \O(e^{- a_0\, |y|})\big)^{2/p}
\end{align*}
by parts \ref{4.1.i} and \ref{4.1.ii}, and the conclusion follows.
\end{proof}

\begin{lemma} \label{Lemma 4.2}
If $W \in L^\wop(\R^N)$, then
\[
\sup_{u \in \M}\, |J(u) - J^\infty(u)| \le \pnorm[\wop]{W}.
\]
\end{lemma}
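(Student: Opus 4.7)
The plan is to compute the difference $J(u)-J^\infty(u)$ explicitly and then bound it by a single application of Hölder's inequality.

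First I would write
\[
J^\infty(u) - J(u) = \int_{\R^N} \bigl(V^\infty - V(x)\bigr)\, u^2\, dx = \int_{\R^N} W(x)\, u^2\, dx,
\]
using the definition $V = V^\infty - W$. So the supremum in question equals $\sup_{u \in \M} \left|\int_{\R^N} W\, u^2\right|$.

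Next I would apply Hölder's inequality with the conjugate exponents $q = p/(p-2)$ and $q' = p/2$, which are indeed conjugate since $(p-2)/p + 2/p = 1$ (this is exactly the reason for the appearance of $\wop$ in the hypothesis). This gives
\[
\left|\int_{\R^N} W\, u^2\right| \le \int_{\R^N} |W|\, u^2 \le \pnorm[\wop]{W}\, \bigl\||u|^2\bigr\|_{p/2} = \pnorm[\wop]{W}\, \pnorm{u}^2.
\]
Since $u \in \M$ means $\pnorm{u}^p = I(u) = 1$, the last factor equals $1$, and the desired bound follows. There is no real obstacle here; the only thing to verify is the conjugacy of the exponents, which is precisely what makes the choice $\wop$ natural for this problem.
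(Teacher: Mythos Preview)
Your proof is correct and is essentially identical to the paper's own argument: compute $J^\infty(u)-J(u)=\int W\,u^2$, apply H\"older with exponents $p/(p-2)$ and $p/2$, and use $\pnorm{u}=1$ for $u\in\M$. The only difference is that you spell out the conjugacy of the exponents explicitly.
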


\begin{proof}
For $u \in \M$,
\[
|J(u) - J^\infty(u)| \le \int_{\R^N} |W(x)|\, u^2 \le \pnorm[\wop]{W} \pnorm{u}^2 = \pnorm[\wop]{W}
\]
by the H\"older inequality.
\end{proof}

\begin{proof}[Proof of Proposition \ref{Proposition 1.4}]
\ref{1.4.ii} By \eqref{1.8} and \eqref{1.11T},
\[
\int_{\R^N} W(x)\, w_1^\infty(x - y)^2\, dx \ge \widetilde{c}\, e^{- a\, |y|} \quad \forall y \in \R^N
\]
for some $\widetilde{c} > 0$, which together with Lemma \ref{Lemma 4.1} gives
\begin{align*}
& \hquad J(\gamma_{w_1 w_1^\infty(\cdot - y)}(e^{i \theta}))\\[10pt]
= & \hquad \frac{J(w_1\, \cos \theta + w_1^\infty(\cdot - y)\, \sin \theta)}{\pnorm{w_1\, \cos \theta + w_1^\infty(\cdot - y)\, \sin \theta}^2}\\[10pt]
\le & \hquad \frac{\lambda_1\, \cos^2 \theta + \left(\lambda_1^\infty - \widetilde{c}\, e^{- a\, |y|}\right) \sin^2 \theta}{\big(|\cos \theta|^p + |\sin \theta|^p\big)^{2/p}} + \O(e^{- a_0\, |y|})\\[10pt]
\le & \hquad \begin{cases}
\left[\lambda_1^\wop + \left(\lambda_1^\infty - \widetilde{c}\, e^{- a\, |y|}\right)^\wop\right]^\pow + \O(e^{- a_0\, |y|}), & \lambda_1 > 0\\[12.5pt]
\lambda_1^\infty - \widetilde{c}\, e^{- a\, |y|} + \O(e^{- a_0\, |y|}), & \lambda_1 \le 0
\end{cases}\\[10pt]
< & \hquad \lambda^\#
\end{align*}
if $|y|$ is sufficiently large since $a < a_0$.

\ref{1.4.i} By Lemma \ref{Lemma 4.2}, Proposition \ref{Proposition 1.1}, and \eqref{1.10},
\[
\lambda_2 \ge \lambda_2^\infty - \pnorm[\wop]{W} > \lambda_1^\infty. \QED
\]
\end{proof}

\section{Proofs of Theorems \ref{Theorem 1.2} and \ref{Theorem 1.3}}

\begin{proof}[Proof of Theorem \ref{Theorem 1.2}]
An approximation argument as in the proof of Lemma \ref{Lemma 2.3} shows that given $\varepsilon > 0$, there is a $R > 0$ such that, for $\gamma_R \in \Gamma_N$ given by
\[
\gamma_R(y) = \frac{w_1^\infty(\cdot + Ry) - w_1^\infty(\cdot - Ry)}{\pnorm{w_1^\infty(\cdot + Ry) - w_1^\infty(\cdot - Ry)}}, \quad y \in S^{N-1},
\]
we have
\[
J^\infty(\gamma_R(y)) < 2^\pow\, \lambda_1^\infty + \varepsilon \quad \forall y \in S^{N-1}.
\]
So $\lambda_N^\infty \le 2^\pow\, \lambda_1^\infty$, and the first equality in \eqref{1.14} then follows since, by Proposition \ref{Proposition 1.1}, $2^\pow\, \lambda_1^\infty = \lambda_2^\infty \le \lambda_N^\infty$.

If $A \in \A_2$, then $A$ contains a point $u_0$ with $\pnorm{u_0^\pm} = 1/2^{1/p}$ since otherwise
\[
A \to S^0, \quad u \mapsto \frac{\pnorm{u^+} - \pnorm{u^-}}{\abs{\pnorm{u^+} - \pnorm{u^-}}}
\]
is an odd continuous map and hence $i(A) = 1$, so
\begin{multline*}
\sup_{u \in A}\, J^\infty(u) \ge J^\infty(u_0) = J^\infty(u_0^+) + J^\infty(u_0^-)\\
\ge \lambda_1^\infty \pnorm{u_0^+}^2 + \lambda_1^\infty \pnorm{u_0^-}^2 = 2^\pow\, \lambda_1^\infty = \lambda_2^\infty
\end{multline*}
by Proposition \ref{Proposition 1.1}. So $\widetilde{\lambda}_2^\infty \ge \lambda_2^\infty$, and the second equality in \eqref{1.14} then follows from the first since $\widetilde{\lambda}_2^\infty \le \widetilde{\lambda}_N^\infty \le \lambda_N^\infty$ by \eqref{1.13}.
\end{proof}

\begin{proof}[Proof of Theorem \ref{Theorem 1.3}]
\ref{Theorem 1.3.i} By Lemma \ref{Lemma 4.2}, \eqref{1.15}, Theorem \ref{Theorem 1.2}, and \eqref{1.12},
\[
\lambda_{m,\, r} \ge \lambda_{m,\, r}^\infty - \pnorm[\wop]{W} \ge \lambda_{2,\, r}^\infty - \pnorm[\wop]{W} > \lambda_2^\infty = \lambda_m^\infty \ge \lambda_m.
\]

\ref{Theorem 1.3.ii} If \eqref{1.1} has a solution $u_0 \in \M_r$ for $\lambda = \lambda_m \ge 0$ with $m$ nodal domains, then $\lambda_{m,\, r} \le J(u_0) = \lambda_m$ by Lemma \ref{Lemma 2.5}, contradicting \ref{Theorem 1.3.i}.
\end{proof}

We close with some questions related to the present paper that remain open.
\begin{enumroman}
\item When is every solution of \eqref{1.1} at the level $\lambda_2$ nodal? Theorem \ref{Theorem 1.1} gives only a partial answer. What can be said about the geometry of the nodal domains for nodal solutions at this level?
\item Assuming that $V$ is radial, and taking into account the symmetry analysis of Gladiali et al. \cite{MR2609032}, is every solution at the level $\lambda_2$ foliated Schwarz symmetric?
\item Can the enhanced penalty condition \eqref{1.11T} be relaxed?
\item Is there an analog of Proposition \ref{Proposition 1.3} for higher minimax levels?
\end{enumroman}

\def\cdprime{$''$}

\end{document}